\newtheorem{teo} {Theorem} 
\newtheorem{lem} {Lemma}
\newtheorem{prop} {Proposition} 
\newtheorem{claim} {Claim}
\begin{document}

\title{Ergodicity and annular homeomorphisms of the torus}


\subjclass{Primary: 37E30, 37E45; Secondary: 37A25, 37C25}


\keywords{Torus homeomorphisms, rotation sets, ergodicity, periodic points.}


\author {Renato B. Bortolatto and Fabio A. Tal}
\thanks{Instituto de Matem\'{a}tica e estat\'{i}stica. Universidade de S\~{a}o Paulo (IME-USP). 
		First author supported by CNPq - Brasil.
		Second author supported by CNPq-Brasil, Proc. \#304360/2005-8.}


\begin {abstract}

Let $f: \mathbb{T}^2 \to \mathbb{T}^2$ be  a homeomorphism homotopic to the identity 
and $F: \mathbb{R}^2 \to \mathbb{R}^2$ a lift of $f$ such that the rotation set $\rho(F)$ is a 
line segment of rational slope containing a point in $\mathbb{Q}^2$.  
We prove that if $f$ is ergodic with respect to the Lebesgue measure on the torus 
and the average rotation vector (with respect to same measure)
does not belong to $\mathbb{Q}^2$ then some 
power of $f$ is an annular homeomorphism.

\end {abstract}


\maketitle
 
\section{Introduction}


The rotation set is a well known conjugation invariant for homeomorphisms of the torus that are 
homotopic to the identity. Inspired by the rotation number of Poincar\'{e} we can start with such homeomorphism 
$f$, fix a lift $F$ of $f$ acting on the plane and define the rotation set $\rho(F)$ as the set of 
accumulation points of the sequences 
\begin{displaymath}
\bigg\{ \frac{F^{n_i} (x_{i}) - x_{i}}{n_i} \bigg\}_{i \in \mathbb{N}}
\end{displaymath}
where $x_{i} \in \mathbb{R}^2$ for all $i \in \mathbb{N}$ and $n_i \in \mathbb{N}$ is such that 
$n_i \xrightarrow{i \to + \infty} + \infty$. 

For a homeomorphism $f$ on the torus homotopic to the identity the limit 
\begin{displaymath}
\lim_{n \to \infty} \frac{F^{n} (x) - x}{n} ,
\end{displaymath}
when it exists, is denoted by $\rho_p (F,x)$ and called pointwise rotation vector. We note that, 
if $\pi: \mathbb{R}^2 \to \mathbb{T}^2$ is the covering map, then $\rho_p (F, x) = \rho_p (F, y)$ 
whenever $\pi(x) = \pi(y)$. 
In contrast with the rotation number of Poincar\'{e} (for orientation preserving homeomorphisms of the 
circle) it does not need to exist for every $x \in \mathbb{R}^2$  
and even when it does exist for every $x \in \mathbb{R}^2$ the limit can still depend on $x$.  
Furthermore, $\cup_{x \in \mathbb{R}^2} \rho_p (F,x)$ is not necessarily equal to $\rho(F)$ 
as the later can be shown to be convex, unlike the former. 
For this reason, a vector $v \in \rho(F)$ for which there is $x$ satisfying $\rho_p(F, x) = v$ is said to be 
realized by $\pi(x)$. A source for this and other results  
is the seminal paper of Misiurewicz and Ziemian \cite{MZ}. 

The question of which dynamical properties of $f$ can be deduced from $\rho(F)$ is somewhat 
well understood, specially when $\rho(F)$ has non-empty interior. 
For example, when $\rho(F)$ has non-empty interior it's possible to 
prove that $f$ has strictly positive topological entropy \cite{LM}. 
It's also known that for each $(p/s,q/s) \in int(\rho(F))$ 
with $p, q, s \in \mathbb{Z}$ there exists $x \in \mathbb{R}^2$ such that 
$F^s (x) - x = (p, q)$ (see \cite{F2}). There exists a similar version 
when $\rho(F)$ is a non-degenerated segment and $f$ preserves area (see \cite{F1}). 

Our work here is focused on better understanding of the 
dynamical properties of $f$ when the rotation set has empty interior, particularly when the homeomorphism 
preserves the natural area measure $\lambda$ on $\mathbb{T}^2$, which we will refer to as  the 
Lebesgue measure on the $2$-torus or simply ``area".  Whenever $\lambda$ is invariant by $F$  one can 
define, as in \cite{MZ}, the average rotation vector with respect to $\lambda$ as 
\begin{displaymath}
\rho_{\lambda}(F) = \int_{x \in \mathbb{T}^2} (F(\pi^{-1}(x)) - \pi^{-1}(x)) d\lambda 
\end{displaymath}
We are particularly interested in understanding when the rotation set of $F$ 
prevents $f$ from having a strictly toral behavior (as defined in \cite{KT1}). 
In this note we prove the following theorem.

\begin{teo}
Let $f$ be a homeomorphism of $\mathbb{T}^2$ homotopic to the identity and 
$F$ be a lift of $f$ such that $\rho(F)$ is a non-degenerated 
line segment with rational slope that intercepts $\mathbb{Q}^2$. 
If the Lebesgue measure on the torus is $F$-invariant and ergodic and 
$\rho_{\lambda} (F)$ does not belong to $\mathbb{Q}^2$ then some 
power of $f$ is an annular homeomorphism. 
\end{teo}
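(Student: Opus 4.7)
I would begin by exploiting the $GL(2,\mathbb{Z})$-invariance of the hypotheses. Since $\rho(F)$ has rational slope, there exists $A\in GL(2,\mathbb{Z})$ mapping the direction of $\rho(F)$ to $(1,0)$; conjugating $f$ by the induced torus automorphism, we may assume $\rho(F)$ itself lies on a horizontal line (annularity is preserved by such conjugations, so this change of variables loses no generality). The hypothesis $\rho(F)\cap\mathbb{Q}^{2}\neq\emptyset$ then forces $\rho(F)$ to lie on a line of rational height; passing to a suitable iterate $f^{N}$ and composing its lift with an appropriate deck translation, I obtain a lift $G$ of $g:=f^{N}$ with $\rho(G)$ a horizontal segment in $\{y=0\}$ containing $(0,0)$. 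Since $A$ and deck translations act rationally, the hypothesis $\rho_{\lambda}(F)\notin\mathbb{Q}^{2}$ becomes $\rho_{\lambda}(G)=(\alpha,0)$ with $\alpha\in\mathbb{R}\setminus\mathbb{Q}$, and Lebesgue measure remains $g$-invariant.

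\textbf{Extracting dynamical ingredients.} With this normalization in hand, I would extract three facts. First, by Franks' theorem (area-preserving version) applied to the integer vector $(0,0)\in\rho(G)$, $G$ has a fixed point in $\mathbb{R}^{2}$, so $g$ has a fixed point on $\mathbb{T}^{2}$. Second, ergodicity of $f$ together with Birkhoff's theorem implies that Lebesgue-a.e.\ point has pointwise rotation vector equal to $\rho_{\lambda}(F)$, so vertical deviations $\pi_{2}(G^{n}(\tilde x)-\tilde x)$ are $o(n)$ for almost every lift $\tilde x$. Third, since $\int\pi_{2}(G(x)-x)\,d\lambda(x)=0$, Atkinson's theorem applied to this zero-mean cocycle gives a.e.\ recurrence of the vertical cocycle: $\liminf_{n}|\pi_{2}(G^{n}(\tilde x)-\tilde x)|=0$ for almost every $\tilde x$.

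\textbf{The main step and the expected obstacle.} To conclude annularity, the plan is to upgrade a.e.\ recurrence to a uniform bound $M:=\sup_{n,x}|\pi_{2}(G^{n}(x)-x)|<\infty$. Such a bound produces a $G$-invariant horizontal strip in $\mathbb{R}^{2}$ projecting to an essential invariant annulus on $\mathbb{T}^{2}$, giving the desired annularity of $g=f^{N}$. I would argue by contradiction: if deviations are unbounded, use the Franks fixed point of $G$ as an anchor, combine it with orbits exhibiting arbitrarily large vertical excursions supplied by the failure of boundedness, and concatenate these pieces (exploiting a.e.\ Atkinson recurrence to return them close to the anchor) to produce a Misiurewicz--Ziemian sequence witnessing a rotation vector with nonzero vertical component---a contradiction with $\rho(G)\subset\{y=0\}$. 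The hard part is precisely this concatenation step, since unbounded yet sublinear deviations do not automatically produce such a rotation vector. The argument will likely require equivariant Brouwer/Le~Calvez-type foliation techniques and must crucially use the irrationality of $\alpha$ to rule out the alternative scenario in which $\mathbb{T}^{2}$ decomposes into invariant horizontal circles (which is incompatible with horizontal rotation number $\alpha\notin\mathbb{Q}$ on a full-measure set). I expect the bulk of the technical difficulty of the proof to concentrate in this forcing step.
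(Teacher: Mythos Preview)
Your normalization and the list of ingredients (Franks' realization theorem, Birkhoff, Atkinson) are right and match the paper, modulo a harmless swap of coordinates: the paper reduces to $\rho(F)=\{0\}\times[a,b]$ and bounds the \emph{horizontal} displacement, while you reduce to a horizontal segment and try to bound the vertical one.

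The main step, however, has a genuine gap that is not just ``hard'' but impossible as stated. Your plan is to concatenate a fixed point, a large-excursion orbit segment, and an Atkinson return in order to build a Misiurewicz--Ziemian sequence witnessing a rotation vector with nonzero transverse component. But by the very definition of $\rho(G)$, every limit of $(G^{n_i}(x_i)-x_i)/n_i$ already lies in $\rho(G)\subset\{y=0\}$; no concatenation of genuine orbit segments can manufacture a vector outside the rotation set. Unbounded-but-sublinear transverse drift is perfectly compatible with the rotation set being a line, so your proposed contradiction never fires. The invocation of Le~Calvez--type foliations is also off target: the paper uses none of that machinery, and it is not clear what contradiction such a foliation would yield here.

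What the paper actually does is entirely different and purely topological. It introduces the forward-invariant unbounded sets $B_0\subset V_0^+$, $B_\pi\subset V_\pi^+$ and their $\omega$-limits $\omega(B_0)$, $\omega(B_\pi)$, shows they are nonempty with connected, unbounded components, and then argues by cases on whether $\pi(\omega(B_0))\cap\pi(\omega(B_\pi))$ is empty. In the nonempty case, one studies the connected components $O$ of the complement $A=(\omega(B_0)\cup(\omega(B_\pi)+(p,q)))^C$; Atkinson recurrence forces $F^{n_j}(O)=O+(0,k_j)$, and the \emph{irrationality of $\alpha$} is used precisely here: if all ratios $k_j/n_j$ were equal they would equal $\alpha$, so one finds $j_1,j_2$ with $n_{j_1}k_{j_2}\neq n_{j_2}k_{j_1}$, contradicting that vertical integer translates of $O$ are pairwise distinct. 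In the empty case one shows $\pi(\omega(B_0))$ is dense, builds a short arc $v$ joining components $\Gamma\subset\omega(B_0)$ and $\Theta\subset\omega(B_\pi)+(p,q)$, and proves one of the two complementary regions $\Omega_i$ of $\Gamma\cup v\cup\Theta$ is forward invariant; this is then contradicted using the Franks periodic points with rotation vectors $(0,\pm1)$ (hence the normalization $a\le -1<1\le b$, which your reduction does not secure). The contradictions are with topological separation properties of these invariant sets, never with the shape of the rotation set itself.
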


Here, after \cite{KT1}, by annular homeomorphism we mean that there exists $M > 0$, 
$v \in (\mathbb{Z}^2 \setminus \{(0,0)\})$ and a lift $G$ of $f$ such that 
\begin{displaymath} 
- M \leq \bigg\langle G^n(x) - x, \frac{v}{\|v\|} \bigg\rangle \leq M, \quad
 \forall x \in \mathbb{R}^2, \forall n \in \mathbb{Z}
\end{displaymath} 
were $\| . \|$ denotes the Euclidean norm for $\mathbb{R}^2$. 

In \cite{D1} it is shown that if $\rho(F)$ is a (non-degenerated) line segment and there exists 
$p \in \mathbb{Q}^2 \cap \rho(F)$ which is not realized by a periodic orbit then some 
power of $F$ is annular. In Theorem $1$ we require that the Lebesgue 
measure is ergodic and $F$-invariant, so \cite{F1} ensures that any point in 
$\mathbb{Q}^2 \cap \rho(F)$ is realized by a periodic orbit. 

We emphasize that in \cite{KT2} the case where $\rho(F) = \{(0,0)\}$ 
is studied and it is shown that $f$ needs not to be annular, hence the hypothesis on the non-degeneracy of 
$\rho(F)$ into a point cannot be removed. 

It is not simple to exhibit an explicit example of homeomorphism that satisfies the hypothesis of Theorem $1$, 
so let us argue that these hypothesis are in fact very 
common. Firstly, let $F (x, y) = (x, y + \psi(x))$ where $\psi$ is any $1$-periodic, continuous function such that 
$\int_0^1 \psi(s) ds = \sqrt{2}$, and $\psi(0)=0$. 
Note that $F$ is a lift of a homeomorphism $f$ of the $2$-torus that is homotopic to the identity and preserves area. 
Additionally, it's easy to see that $\rho(F)$ is a non-degenerated line segment and that $\rho_{\lambda}(F) = (0, \sqrt{2})$.   

Now, let $\mathcal{H}_0 (\mathbb{T}^2, \lambda)$ be
the set of homeomorphisms of the $2$-torus which are homotopic to the identity, preserve the Lebesgue measure 
and, for a given $v \in \mathbb{R}^2$, let $\mathcal{H}_0^v (\mathbb{T}^2, \lambda)$ be the subset of 
$\mathcal{H}_0 (\mathbb{T}^2, \lambda)$ 
of homeomorphisms for which there exists a lift such that the average rotation vector with respect to $\lambda$ is $v$. 
The main result of \cite{AP}, an extension of the celebrated Oxtoby-Ulam Theorem, implies that ergodicity of the 
Lebesgue measure is a typical (dense $G_{\delta}$) property in both these spaces. 

The stated result, however, is still not sufficient for our purposes, but a careful look
at the proofs of both Proposition $3$ and Theorem $2$ of \cite{AP} shows that they prove
something more. In fact, if $I^2 = [0, 1] \times [0,1]$ and $\pi$ is the described map $\phi$ in Proposition $1$ of
\cite{AP}, their technique show that for every neighborhood $V$ of a given $f \in \mathcal{H}_0^v (\mathbb{T}^2, \lambda)$, 
there exists an homeomorphism $h$ such that, if $g = hf$ then $g$ is both
ergodic and belongs to $V$ and such that $h$ pointwise fixes any point in $\pi(\partial I^2)$. 
In particular, for the $F$ we took above we have that 
$F$ fixes pointwise the line $\{x \in \mathbb{R}^2 \; | \; (x)_1 = 0\}$. 
We can then find $g$ ergodic and with a lift $G$ having the 
same average rotation vector as $F$, and such that $F$ and $G$ coincide on $\partial I^2$. 
Then one can show that $\rho(G) = \{0\} \times [a, b]$, with $a \leq 0$ and $b \geq \sqrt{2}$, 
as $G$ has fixed points and $\rho_{\lambda}(G) = \rho_{\lambda}(F ) = (0, \sqrt{2})$. 

The technique we use in the proof of Theorem $1$ is to study the sets 
$B_{0}$, $B_{\pi }$, $\omega(B_{0})$ and $\omega(B_{\pi})$ as defined in 
\cite{AT1, AT2}. 
The presence of these sets, which we'll describe in the next section, 
have many important dynamical consequences that 
have proven useful in obtaining a number of results 
(see for instance \cite{T1}, \cite{ATG}, \cite{D1}). 
Many ideas used in our proof here follow from \cite{T1}. 


\section{Preliminaries}  


In this work we consider $\mathbb{T}^2 = \mathbb{R}^2 / \mathbb{Z}^2$ to be the flat $2$-torus, and $\lambda$ its area  
measure, which we call the Lebesgue measure. The covering projection from the universal cover 
$\mathbb{R}^2$ to $\mathbb{T}^2$ is denoted by $\pi$. Given a point $x \in \mathbb{R}^2$ and the canonical basis, 
we denote by $(x)_1$ (respectively $(x)_2$) its first (resp. second) coordinate of $x$. 
As noted before, we denote by $\mathcal{H}_0(\mathbb{T}^2, \lambda)$ 
the set of area preserving homeomorphisms of $\mathbb{T}^2$ which are homotopic to the identity.

Let $S \subseteq \mathbb{R}^2$ with $S \neq \emptyset$. 
We will say that $S$ is unbounded rightward if $\sup_{x \in S} (x)_1 = + \infty$, and we will say 
that $S$ is unbounded leftward if $\inf_{x \in S} (x)_1 = - \infty$. If $S$ is either unbounded leftward or rightward we say that 
$S$ is horizontally unbounded. Otherwise $S$ is horizontally bounded, in which case there are real numbers $a, b$ such 
 that $S$ is contained in 
 $$([a, +\infty[ \times \mathbb{R}) \cap (] - \infty, b] \times \mathbb{R}) = [a, b] \times \mathbb{R}$$ 
In this case we call any real number greater than $|b - a|$ a horizontal bound for $S$. 
Likewise, a non-empty set $S \subseteq \mathbb{R}^2$ will be called unbounded upward if  
$\sup_{x \in S} (x)_2 = + \infty$ and unbounded downward if $\sup_{x \in S} (x)_2 = - \infty$. 
A set $S \subseteq \mathbb{R}^2$ will be called vertically unbounded if it is unbounded upward or downward. Again, 
in this case there are real numbers $a, b$ such that $S$ is contained in $\mathbb{R} \times [a, b]$. 

To fix the terminology, given a curve $\gamma:[0,1]\to \mathbb{R}^2$, we denote by $[\gamma]$ its image. 
By $\lfloor . \rfloor : \mathbb{R} \to \mathbb{Z}$  we'll mean the usual floor function. 
Finally, we'll say that a set $S \subset \mathbb{R}^2$ separates two sets $L, R \subset \mathbb{R}^2$ if $L$ and 
 $R$ are in distinct connected components of the complement $S^C$.   
 
The following two results are relevant in our proofs. The first is a theorem 
of J. Franks we mentioned in the introduction and the second can be deduced from 
a theorem of G. Atkinson. 

\begin{lem}[\cite{F1}]
Suppose that $f \in \mathcal{H}_0(\mathbb{T}^2, \lambda)$ and $F$ is a lift such that $\rho(F)$ 
is a non-degenerated line segment. Then for every $(p/s,q/s) \in \rho(F)$ with $p, q, s \in \mathbb{Z}$ there exists 
$x \in \mathbb{R}^2$ such that $F^s (x) - x = (p, q)$.
\end{lem}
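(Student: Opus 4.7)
The strategy is to reduce to the special case in which one must find a fixed point of a lift, and then to prove this fixed-point version using Brouwer translation theory together with area preservation, in the spirit of Franks' argument in \cite{F1}.

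\emph{Reduction to the case $(p,q) = (0,0)$, $s = 1$.} Given $(p/s, q/s) \in \rho(F)$, I would set $\tilde F(x) = F^s(x) - (p,q)$. Since $F$ commutes with integer translations, so does $F^s$, and hence $\tilde F$ is a lift of $f^s$. Because $F^s$ preserves Lebesgue measure on $\mathbb{R}^2$ (as $f$ does on $\mathbb{T}^2$) and $\tilde F$ differs from $F^s$ by a fixed translation, $\tilde F$ is also area-preserving. A direct computation of $(\tilde F^N(x) - x)/N$ shows that $\rho(\tilde F) = s\,\rho(F) - (p,q)$, which is still a non-degenerate segment and now contains the origin. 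A fixed point of $\tilde F$ is a point $x \in \mathbb{R}^2$ with $F^s(x) - x = (p,q)$, so it suffices to prove: if $f \in \mathcal{H}_0(\mathbb{T}^2, \lambda)$ and $F$ is a lift with $0 \in \rho(F)$ and $\rho(F)$ a non-degenerate segment, then $F$ has a fixed point.

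\emph{Producing the fixed point.} I would argue by contradiction, assuming $F(x) \neq x$ for every $x \in \mathbb{R}^2$. The two main ingredients are Brouwer's plane translation theorem, which produces around every $x$ a topological open disk $D$ with $F(D) \cap D = \emptyset$, and area preservation on $\mathbb{T}^2$: by Poincar\'e recurrence, Lebesgue-almost every $\pi(x)$ is recurrent, so there exist $n_k \to \infty$ and $v_k \in \mathbb{Z}^2$ with $F^{n_k}(x) - x - v_k \to 0$. The non-degeneracy of $\rho(F)$ is used to select generic recurrent orbits producing integer translation vectors $v_k$ lying on both sides of $0$ along the rotation segment. Concatenating arcs of such orbits one builds a closed chain of free topological disks whose total $\mathbb{Z}^2$-translation is zero, contradicting the fact (Franks' chain lemma for area-preserving homeomorphisms) that no such free closed chain can exist.

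\emph{Main obstacle.} The delicate point is converting the local pointwise freeness supplied by Brouwer's theorem into a global contradiction by splicing together carefully chosen orbit segments with prescribed integer translations. Non-degeneracy of $\rho(F)$ is essential here: it provides the opposing integer translation vectors that make the splicing close up. If $\rho(F) = \{0\}$ this is not possible in general and the argument fails, in accordance with the Anosov--Katok-type examples referenced in \cite{KT2}.
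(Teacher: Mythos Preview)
The paper does not prove this lemma; it is quoted from \cite{F1} and used as a black box, so there is no in-paper proof to compare against. Your reduction to the case $(p,q)=(0,0)$, $s=1$ is correct and is exactly how Franks proceeds, and invoking a periodic chain of free disks together with Franks' extension of the Brouwer plane translation theorem (from \cite{F3}) is indeed the right framework for the fixed-point step.

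Where your sketch goes wrong is in the explanation of \emph{why} the chain closes up. You assert that non-degeneracy of $\rho(F)$ furnishes ``integer translation vectors $v_k$ lying on both sides of $0$'' and is essential, citing \cite{KT2} as evidence that the argument fails when $\rho(F)=\{0\}$. Both claims are mistaken. First, \cite{KT2} concerns failure of the \emph{annular} property, not of fixed points; for area-preserving $f$ with $0\in\rho(F)$ the lift $F$ always has a fixed point, whether $\rho(F)$ is a point, a segment, or has interior. Second, your ``opposing vectors'' picture only makes sense when $0$ is interior to the segment; if $0$ is an endpoint, all pointwise rotation vectors lie to one side and Poincar\'e recurrence alone does not produce cancelling displacements. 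Franks' actual argument does not rely on this. Area preservation makes $f$ chain recurrent on $\mathbb{T}^2$, and one shows that $0\in\rho(F)$ forces, for every $\varepsilon>0$, a periodic $\varepsilon$-chain for $F$ in $\mathbb{R}^2$ with zero net displacement; this is then promoted to a periodic free-disk chain, contradicting fixed-point-freeness. The non-degeneracy hypothesis appears in the lemma only because that is the setting of the present paper, not because the fixed-point step needs it.
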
  

\begin{lem}[\cite{A1}]
Let $M$ be a compact manifold and $f: M \to M$ be continuous. 
Let $\mu$ be a Borelian, ergodic probability measure. 
Let $g: M \to \mathbb{R}$ be continuous and satisfy
\begin{displaymath}
\int_M g d\mu = 0
\end{displaymath}
If $A$ is a Borel set and $\mu(A) > 0$ then for $\mu$-a.e. $x \in A$ there is a sequence 
$n_k \xrightarrow{k \to \infty} \infty$ such that 
\begin{displaymath}
f^{n_k} (x) \xrightarrow{k \to \infty} x \quad 
\text{and } \quad \sum_{i = 0}^{n_k - 1} g(f^{i} (x)) \xrightarrow{k \to \infty} 0
\end{displaymath}
\end{lem}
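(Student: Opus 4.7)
The plan is to deduce this joint recurrence statement from the conservativity of an associated skew product, which is the substance of Atkinson's theorem. I would form the skew product $F\colon M\times\mathbb{R}\to M\times\mathbb{R}$ by $F(x,t)=(f(x),\,t+g(x))$. Since $f$ preserves $\mu$ and the fiber maps are translations, $F$ preserves the $\sigma$-finite product measure $\nu=\mu\otimes\mathrm{Leb}_{\mathbb{R}}$. The Birkhoff sum $S_n g(x):=\sum_{i=0}^{n-1}g(f^i x)$ appears as the $\mathbb{R}$-coordinate of $F^n(x,0)$, so the conclusion ``$S_{n_k}g(x)\to 0$'' is literally the condition that the fiber coordinate of $F^{n_k}(x,0)$ returns to $0$.

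The central ingredient I would quote from \cite{A1} is that, under the hypotheses (ergodicity of $\mu$ and $\int g\,d\mu=0$), the skew product $F$ is conservative: every Borel set $E\subset M\times\mathbb{R}$ with $0<\nu(E)<\infty$ has the property that $\nu$-a.e. point of $E$ returns to $E$ infinitely often under $F$. The equivalent pointwise statement, which Atkinson proves directly, is that $\liminf_n|S_n g(x)|=0$ for $\mu$-a.e. $x$; conservativity of $F$ then follows by Hopf's criterion applied to the skew product.

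Given conservativity, the joint recurrence in the statement follows from a Poincar\'e recurrence sweep. Using that $M$ is a compact manifold (hence second countable), I would fix a countable basis $\{U_j\}$ of open sets of $M$ with diameters tending to zero along any subsequence shrinking to a given point. For each $j$ with $\mu(A\cap U_j)>0$ and each $k\in\mathbb{N}$, apply Poincar\'e recurrence to the finite-$\nu$-measure set $E_{j,k}=(A\cap U_j)\times(-1/k,1/k)$ under $F$: this yields a full-measure subset of $E_{j,k}$ whose points have infinitely many return times $n$ to $E_{j,k}$, i.e.\ with $f^n(x)\in U_j$ and $|S_n g(x)|<1/k$. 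Intersecting the projections to $A$ of these countably many full-measure sets, I obtain a $\mu$-conull subset of $A$ on which, for each $x$, one can choose $U_{j(m)}\ni x$ with diameters tending to zero and $k(m)\to\infty$, extracting the required sequence $n_k\to\infty$ with $f^{n_k}(x)\to x$ and $S_{n_k}g(x)\to 0$.

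The main obstacle is the conservativity of $F$, which is precisely the content of Atkinson's theorem and is strictly stronger than what Birkhoff's pointwise ergodic theorem supplies: Birkhoff only gives $S_n g(x)/n\to 0$, not that $S_n g(x)$ itself returns near $0$. Atkinson's argument proceeds by contradiction via the Hopf decomposition---a dissipative part of positive $\nu$-measure would, through a Chac\'on--Ornstein-type ratio argument on the skew product, force $\int g\,d\mu\neq 0$. Once conservativity is in hand, the remainder of the proof is the routine bookkeeping with a countable basis and Poincar\'e recurrence described above; continuity of $g$ and compactness of $M$ are used only mildly, to ensure second countability and to make the inner products and neighborhood bases behave well.
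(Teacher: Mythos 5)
The paper gives no proof of this lemma---it is quoted from Atkinson's paper, with the remark that it ``can be deduced from a theorem of G.\ Atkinson''---and your proposal carries out exactly that deduction in the standard way: conservativity of the skew product $F(x,t)=(f(x),t+g(x))$ with respect to $\mu\otimes\mathrm{Leb}$, followed by Halmos--Poincar\'e recurrence applied to the countable family of sets $(A\cap U_j)\times(-1/k,1/k)$ and a Fubini argument. The argument is correct; the only point worth making explicit is that for $\mu$-a.e.\ $x\in A$ every basis neighborhood $U_j\ni x$ satisfies $\mu(A\cap U_j)>0$ (the exceptional set is a countable union of $\mu$-null sets), which is what allows you to shrink the $U_{j(m)}$ down to $x$ and extract the sequence $n_k$.
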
 

Our main result is a direct consequence of the following statement

\begin{teo}
Let $f \in \mathcal{H}_0(\mathbb{T}^2, \lambda)$ and let $F: \mathbb{R}^2 \to \mathbb{R}^2$ be a lift of $f$ such that 
$\rho(F) = \{0\} \times [a,b]$, with $a \leq -1 < 1 \leq b$.  
Suppose $f$ is ergodic with respect to the Lebesgue measure $\lambda$ and that
$\rho_{\lambda}(F)$ is of the form $(0, \alpha)$ for some $\alpha \in \mathbb{R} \setminus \mathbb{Q}$. 
Then there is $M > 0$ such that $| ( F^n(x) -x )_1 | \leq M$ for all $n \in \mathbb{Z}, \; x \in \mathbb{R}$ .
\end{teo}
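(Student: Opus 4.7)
The plan is a proof by contradiction. Assume there is no such $M$; then some $F$-orbit is horizontally unbounded, so one of the sets $B_0, B_\pi \subset \mathbb{T}^2$ of \cite{AT1, AT2} — roughly, the set of points in the torus whose lifted orbit escapes to $+\infty$ (respectively $-\infty$) in the first coordinate — is nonempty. Without loss of generality assume $B_0 \ne \emptyset$; the case $B_\pi \ne \emptyset$ is symmetric, and the symmetric statement for $n \in \mathbb{Z}$ then follows by applying the forward result to $F^{-1}$.

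The argument combines three ingredients. First, Lemma 1 applied to the rationals $0, -1, 1 \in [a,b]$ gives a fixed point $p_0$ of $F$ and points $p_{\pm}$ with $F(p_{\pm}) = p_{\pm} + (0,\pm 1)$. Their projections are $f$-periodic with horizontally bounded lifts, so they serve as reference orbits that any horizontally escaping orbit must go around. Second, the displacement $g(x) = (F(\tilde x) - \tilde x)_1$ is a well-defined continuous function on $\mathbb{T}^2$ with $\int g\, d\lambda = 0$, since $\rho_\lambda(F) = (0,\alpha)$. Applying Lemma 2 to $g$, together with ergodicity, shows that for $\lambda$-a.e.\ $x$ there is a sequence $n_k \to \infty$ with $f^{n_k}(x) \to x$ and $(F^{n_k}(\tilde x) - \tilde x)_1 \to 0$; in particular $\lambda(B_0) = 0$. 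Third, the $f$-invariant closed set $\omega(B_0)$ has $\lambda$-measure $0$ or $1$, and both alternatives will be ruled out.

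If $\lambda(\omega(B_0)) = 1$, then $\lambda$-a.e.\ orbit accumulates on $\omega(B_0)$, whose topological structure from \cite{AT1, AT2, T1} is essentially that of a horizontally annular invariant continuum; when this is forced to contain the Franks fixed point $p_0$ and to be compatible with the $(0, \pm 1)$-periodic points, one obtains an invariant annular dynamics whose vertical rotation would have to be rational, contradicting the irrationality of $\alpha$. If instead $\lambda(\omega(B_0)) = 0$, one combines the Atkinson-style recurrence enjoyed by $\lambda$-a.e.\ point with the wandering nature of $B_0$: the Franks periodic points $p_0, p_{\pm}$, viewed on the universal cover, cut the plane into regions through which any $B_0$-orbit would have to pass while remaining consistent with a purely vertical rotation set, and this yields the contradiction. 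I expect this last, topological step to be the main obstacle: extracting from the nonemptiness of $B_0$ a concrete geometric obstruction requires threading together the Atkinson recurrence, the Franks periodic orbits, and the structural results of \cite{AT1, AT2}, very much in the spirit of \cite{T1}, with the irrationality of $\alpha$ entering precisely to kill the annular alternative that would otherwise survive.
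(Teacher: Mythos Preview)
Your outline assembles the right ingredients---Franks' periodic points, Atkinson's recurrence, the sets $B_0$, $B_\pi$, $\omega(B_0)$---but the structure you propose does not match what actually makes the argument close, and the two cases you set up are not the ones that lead to a contradiction. First, in this paper $B_0$ and $B_\pi$ are subsets of $\mathbb{R}^2$ (connected components, through $\infty$, of $\bigcap_{n\le0}\widehat F^n(\widehat V_0^+)$, etc.), not subsets of $\mathbb{T}^2$ defined by horizontal escape; and under the contradiction hypothesis \emph{both} $\omega(B_0)$ and $\omega(B_\pi)$ are nonempty and are used together throughout. Treating only $B_0$ ``without loss of generality'' discards exactly the interaction that drives the proof.

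The paper's dichotomy is not $\lambda(\pi(\omega(B_0)))\in\{0,1\}$ but whether $\pi(\omega(B_0))\cap\pi(\omega(B_\pi))=\emptyset$. When the projections meet, one studies the complement $A=(\omega(B_0)\cup(\omega(B_\pi)+(p,q)))^C$: its connected components are permuted by $F$ and by vertical integer translation, Atkinson gives $F^{n_j}(O)=O+(0,k_j)$, and the irrationality of $\alpha$ forces two incompatible such relations, yielding the contradiction. When the projections are disjoint, one first shows (via a vertically-unbounded ``ladder'' built from Atkinson recurrence) that both $\pi(\omega(B_0))$ and $\pi(\omega(B_\pi))$ are dense, then bridges a component $\Gamma$ of $\omega(B_0)$ and a component $\Theta$ of $\omega(B_\pi)+(p,q)$ by a short free arc $v$; the complement of $\Gamma\cup v\cup\Theta$ has two components $\Omega_1,\Omega_2$, one of which is shown to be forward invariant, and the Franks points with rotation $(0,\pm1)$ then give the contradiction. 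Your proposed handling of both cases is where the real gap lies: in your ``$\lambda=1$'' branch there is no mechanism by which an annular continuum forces the vertical rotation to be rational (the paper never argues this), and your ``$\lambda=0$'' branch is, as you yourself flag, not worked out. The missing idea is precisely the joint use of $\omega(B_0)$ and $\omega(B_\pi)$ and the intersect/disjoint dichotomy on their projections.
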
 

Let us show that Theorem $1$ follows from Theorem $2$. 
Let $f,g \in \mathcal{H}_0(\mathbb{T}^2, \lambda)$ and let $F$ and $G$ be lifts of $f$ and $g$, respectively.  
Assume $\rho(F) = \{0\} \times [a,b] $ and $\rho(G)$
is a non-degenerated line segment with rational slope that intercepts $\mathbb{Q}^2$.  
Then there is an invertible map $A \in GL(2, \mathbb{Z})$ 
such that $A\rho(G)$ is a line segment of the form $\{ \frac{p}{q} \} \times [a,b]$. 
Using lemma $2.4$ in \cite{KK} we see $A$ as a change of coordinates in 
the torus (and as such no other property is destroyed by $A$). 
We can then assume, taking $h = (Ag)^q$ and $H = (AG)^q - (p, 0)$, that 
$\rho(H) = \{ 0\} \times [a,b]$ (see \cite{MZ}). 
Taking a power $n$ of $h$ such that the length of $\rho(H^n)$ is greater than three 
and changing the lift we can assume that $a \leq -1 < 1 \leq b$.  


\section{Definitions and first properties}


From now on, let $f$ be a homeomorphism of the $2$-torus isotopic to the identity, and let $F$ be a lift of $f$. 
Let $e_0 = (1,0)$ and $e_\pi = (-1, 0)$. We define 
\begin{displaymath}
V_{0}^{+} := \{ x \in \mathbb{R}^2 | \langle x,e_0 \rangle \geq 0 \} =  \{ x \in \mathbb{R}^2 | (x)_1 \geq 0 \}
\end{displaymath}
and
\begin{displaymath}
V_{\pi}^{+}:= \{ x \in \mathbb{R}^2 | \langle x,e_\pi \rangle \geq 0 \} =  \{ x \in \mathbb{R}^2 | (x)_1 \leq 0 \}
\end{displaymath}

Consider $\mathbb{R}^2 \cup \{ \infty\} \sim S^2$ the one-point compactification of 
$\mathbb{R}^2$ and $\widehat{F}$ the homeomorphism 
induced by $F$ on $S^2$ fixing the point in the infinity. The sets 
$\widehat{V_{0}^{+}} := V_{0}^{+} \cup \{\infty\}$ and 
$\widehat{V_{\pi}^{+}} := V_{\pi}^{+} \cup \{\infty\}$ correspond to $V_{0}^{+}$ and $V_{\pi}^{+}$, 
respectively, on $S^2$.

Let $\widehat{B_{0}}$  be the connected component of 
\begin{displaymath}
\bigcap_{n \leq 0} \widehat{F}^n (\widehat{V_{0}^{+}}) 
\end{displaymath}
that contains the point at the infinity.  Let $\widehat{B_{\pi}}$ be the connected component of 
\begin{displaymath}
\bigcap_{n \leq 0} \widehat{F}^n (\widehat{V_{\pi}^{+}}) 
\end{displaymath}
that contains the point at the infinity.

Define the sets $B_{0}$ and $B_{\pi}$ in $\mathbb{R}^2$ that correspond, respectively, 
to the sets $\widehat{B_{0}}$ and $\widehat{B_{\pi}}$ on $S^2$. To avoid confusion, in this 
work we'll use the notation $B(x; \varepsilon)$ for the ball of  center $x$ (either in $\mathbb{T}^2$ or in $\mathbb{R}^2$) 
and radius $\varepsilon \geq 0$.

We'll need the following result. 

\begin{lem} \label{nonempty}
Let $F$ be a lift of a homeomorphism $f$ homotopic to the identity with 
$(0,0)\in \rho(F)$.
Then $B_{0}$ and $B_{\pi}$ are not empty. 
\end{lem}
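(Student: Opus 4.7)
The plan is to argue by contradiction: assume $B_0 = \emptyset$, equivalently $\widehat{B_0} = \{\infty\}$ in $S^2$, and deduce a contradiction with $(0,0) \in \rho(F)$. (The statement for $B_\pi$ then follows by running the same argument with $V_\pi^+$ in place of $V_0^+$.) The first step is to reformulate the hypothesis topologically. Writing $E_N := \bigcap_{m=0}^N F^{-m}(V_0^+)$ and letting $C_N$ denote the connected component of $\infty$ in $\widehat{E_N} := E_N \cup \{\infty\}$, the sets $C_N$ form a decreasing sequence of continua in $S^2$ whose intersection is $\widehat{B_0}$. Compactness of $S^2$ combined with $\widehat{B_0} = \{\infty\}$ forces Hausdorff convergence $C_N \to \{\infty\}$: for every $R > 0$ there is $N_R$ with $C_{N_R} \cap \overline{B(0, R)} = \emptyset$. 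Since the vertical integer translation $T(x,y) := (x, y+1)$ commutes with $F$ and preserves $V_0^+$, each $C_N$ is $T$-invariant, so the avoidance propagates to $C_{N_R} \cap \overline{B((0, k), R)} = \emptyset$ for every $k \in \mathbb{Z}$.

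Next I would use $(0,0) \in \rho(F)$ to construct, for each large $N$, a point in $C_N$ of modulus bounded uniformly in $N$, contradicting the previous step. By Misiurewicz--Ziemian, the hypothesis yields an $f$-invariant Borel probability measure $\mu$ on $\mathbb{T}^2$ with rotation vector $(0,0)$; by ergodic decomposition one obtains an ergodic component $\mu'$ whose first-coordinate rotation $\alpha$ is nonnegative. If $\alpha > 0$, Birkhoff produces $\mu'$-a.e. lifted points $\tilde x$ with $(F^n(\tilde x))_1 \to +\infty$, so $\inf_{n \geq 0}(F^n(\tilde x))_1 > -\infty$; an integer horizontal translate of $\tilde x$ then yields $y \in \mathbb{R}^2$ with $(y)_1$ uniformly bounded whose entire forward orbit is contained in $V_0^+$. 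If $\alpha = 0$, Lemma 2 (Atkinson) applied to $g := (F - \mathrm{id})_1$ provides, via a similar selection, a point $y$ with the same properties.

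The main obstacle is then to upgrade $y \in \bigcap_{n \geq 0} F^{-n}(V_0^+) \subset E_N$ to the stronger statement $y \in C_N$ for all large $N$, i.e., to connect $y$ to $\infty$ by a continuous arc inside $E_N$. The half-plane $\{x_1 \geq NC\}$, with $C := \sup|F - \mathrm{id}|$, lies in $E_N$ and is manifestly connected to $\infty$ in $\widehat{E_N}$, and by the commutation of $F$ with integer translations the discrete translates $y + (k, 0)$ lie in $A := \bigcap_{n \geq 0} F^{-n}(V_0^+) \subset E_N$ and enter this half-plane once $k$ is large; however, bridging $y$ continuously to this rightward structure inside $E_N$ is delicate because $F$ does not commute with non-integer horizontal translations and naive horizontal segments need not lie in $E_N$. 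I expect this to be the most technical point of the proof; it should be addressable by a careful topological argument, likely via the forward orbit closure of $y$ (also contained in $A$) or a Brouwer-type path selection in $S^2$. Once that connection is established, $y \in C_N$ with $|y|$ uniformly bounded contradicts the first step for any $R > |y|$.
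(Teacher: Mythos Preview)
Your outline has two genuine gaps, one of which you acknowledge and one of which you do not.

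\textbf{The Atkinson step does not do what you claim.} In the case $\alpha=0$ you invoke Lemma~2 to obtain a point $y$ with $\inf_{n\ge0}(F^n(y))_1>-\infty$. Atkinson's lemma only guarantees that the Birkhoff sums $(F^{n_k}(y)-y)_1$ return arbitrarily close to $0$ along \emph{some} subsequence $n_k\to\infty$; it says nothing about the intermediate times, and for a generic zero-mean ergodic cocycle the partial sums oscillate unboundedly in both directions. Hence you cannot conclude that the forward orbit of $y$ stays in $V_0^+$ after a horizontal integer shift. This case cannot be avoided: under the sole hypothesis $(0,0)\in\rho(F)$ one may have $\rho(F)\subset\{0\}\times\mathbb{R}$ (indeed this is exactly the setting of Theorem~2), and then \emph{every} ergodic measure has first-coordinate rotation $\alpha=0$, so the $\alpha>0$ branch is never available.

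\textbf{The connectivity step is the heart of the matter and is left open.} Even granting a point $y\in A=\bigcap_{n\ge0}F^{-n}(V_0^+)$, you still need $y\in C_N$, i.e.\ you need an unbounded connected subset of $E_N$ through $y$. Your suggested bridges do not work: the forward orbit closure of $y$ is not connected, and the horizontal ray $\{y+(t,0):t\ge0\}$ need not lie in $E_N$ for non-integer $t$ since $F$ does not commute with non-integer horizontal translations. Without this step you have only produced a point of $E_N$, not of $C_N$, and no contradiction follows. Note also that if you \emph{could} place a single $y\in\mathbb{R}^2$ in every $C_N$, you would have $y\in\bigcap_N C_N=\widehat{B_0}$ directly, so the Hausdorff-convergence detour in your first paragraph is unnecessary.

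The paper's argument is organized quite differently and avoids both difficulties. It splits into two cases according to whether $\sup_{x,n>0}(F^n(x)-x)_1$ is infinite or finite. In the unbounded case one shows directly that for every $M>1$ there is $n(M)$ with $F^{n(M)}(\partial V_0^+)\cap(\partial V_0^++(M,0))\neq\emptyset$ (using only $(0,0)\in\rho(F)$ to rule out the alternative $F^{n(M)}(V_0^+)\subset V_0^++(1,0)$), and then quotes a lemma from \cite{T1}. In the bounded case one takes $K=\bigcup_{i>0}F^{-i}(V_0^++(M,0))$, which is connected, contained in $V_0^+$, satisfies $F^{-1}(K)\subset K$, and is invariant under $+(0,1)$; the nested intersection $\bigcap_i \widehat{F}^{-i}(\widehat K)$ is then shown to meet $\mathbb{R}^2$ (again using $(0,0)\in\rho(F)$ to exclude $F^{-n}(K)\subset V_0^++(M+1,0)$), and this intersection lies in $B_0$. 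No measure theory is used.
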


\begin{proof}
This follows the ideas in \cite{ATG}, but since the context is somewhat different, we  include the proof for completeness.
We will show that $B_{0}$ is nonempty, the other case is analogous.  

First, assume that, for every $M>0,$ there exists $x\in \mathbb{R}^2$ and $n>0$ such that $(F^n(x)-x)_1\geq M + 1$. 
In this case, for every positive $M$ we can find a positive integer $n(M)$ such that $F^{n(M)}(V_{\pi}^+)$ intersects 
$V_0^{+}+(M,0)$. We claim that, if $M>1,$ this implies $F^{n(M)}(\partial V_0^+)$ intersects 
$\partial V_0^+ + (M, 0)$. If this was not the case, 
since $F^{n(M)}(V_{\pi}^{+})$ intersects but does not contain $V_0^{+}+(M,0)$,  it would follow that 
$F^{n(M)}(V_0^{+})\subset \left(V_0^{+}+(M,0)\right) \subset \left(V_0^{+}+(1,0)\right).$ 
But these inclusions imply that, for every $x\in V_0^{+},\, \liminf_{i\to\infty}\frac{(F^i(x)-x)_1}{i}\geq \frac{1}{n(M)}$, 
and since $\pi(V_0^{+})=\mathbb{T}^2,$ we would have $\rho(F)\subset V_0^{+}+(\frac{1}{n(M)},0)$, a contradiction.
Therefore, for every $M>1$  there exists $n(M)$ such that $F^{n(M)}(\partial V_0^+ )\cap (\partial V_0^+ + (M, 0))$ 
is not empty. The result now follows exactly like lemma 1 of \cite{T1}.

Assume now that there exists $M>0$ such that, for all $x$ and all positive integers $i$, $(F^{i}(x)-x)_1<M$. 
Let $K=\bigcup_{i>0}F^{-i}(V_0^{+}+(M,0))$ which is a connected unbounded set, satisfying $F^{-1}(K)\subset K$ and 
$K\subset V_0^{+}.$ Note also that $K=K+(0,1)$.  Now, if $\widehat K$ is the corresponding set  in $S^2$, then 
$\widehat{F}^{-i}(K)$ is a nested sequence of connected compact sets, all of which containing the infinity. Let 
$\widehat{K_{\infty}}=\bigcap_{i=0}^{\infty}\widehat{F}^{-i}(\widehat{K})$ be their intersection, and let 
$K_{\infty}$ the corresponding set in $\mathbb{R}^2$. 

We claim $K_{\infty}$ is not empty. Otherwise, by compactness, there would be a first integer $n>0$ such that $F^{-n}(K)$ 
does not intersect the fundamental domain $[M, M+1]\times [0,1].$ Since $F^{-n}(K)$ is invariant by integer vertical 
translations, it must also be disjoint from the infinite strip $[M,M+1]\times \mathbb{R}$, and as $F^{-n}(K)$ is connected, it 
would be contained in $V_0^{+}+(M+1,0)$. In particular, $F^{-n}(V_0^{+}+(M,0))\subset V_0^{+}+(M+1,0).$ This implies that, 
for all $x \in V_0^{+}+(M,0)$, $\liminf_{i\to\infty} \frac{(F^{-i}(x)-x)_1}{i} \geq \frac{1}{n}$ which again contradicts 
$\{(0,0)\} \in \rho(F).$

But then, since $\widehat{K_{\infty}}$ is connected and contains the infinity, every connected component of 
$K_{\infty}$ is unbounded, and since 
$$K_{\infty}=\bigcap_{l=0}^{\infty}\left(\bigcup_{i=l}^{\infty}F^{-i}(V_0^{+}+(M,0))\right)$$
is a $F$-invariant set and $K_\infty\subset V_0^{+},$ it follows that $K_\infty\subset B_0$.
\end{proof}

Note that, since $\widehat{V_{0}^{+}}, \widehat{V_{\pi}^{+}}$ are closed and 
$\widehat{F}$ is a homeomorphism, the sets $\widehat{B_0}, \widehat{B_{\pi }}$ are closed and 
therefore the sets $B_{0}, B_{\pi}$ are also closed. The set  $B_{0}$ can be seen as 
the union of all connected closed, unbounded sets $C$ of $\mathbb{R}^2$ that satisfy 
\begin{equation} \tag{*}
(F^n(x))_1 \geq 0 \;  \forall n \in \mathbb{N}^+ 
\end{equation}
for all $x \in C$. There is a analogous characterization for $B_{\pi}$. 

We now define the $\omega$-limit of $B_{0}$ as usual by 
\begin{displaymath}
\omega(B_{0}) := \bigcap_{i =1}^{\infty} \overline{\bigcup_{j = i}^{\infty} F^j (B_{0})} = \bigcap_{i = 1}^{\infty} F^{i} (B_{0})
\end{displaymath}

The sets $\omega(B_{0})$ and $\omega(B_{\pi})$ 
are closed and all of its connected components are unbounded (see \cite{T1} proposition $1$ items $2,3$ and 
proposition $2$ items $2,3$). 
Note that $\omega(B_{0}) \subseteq B_{0}$ 
(since $F(B_{0}) \subseteq B_{0}$) and that $\omega(B_{\pi}) \subseteq B_{\pi}$. 
It's easy to see that $\omega(B_{0})$ and $\omega(B_{\pi})$ are completely invariant, i.e., 
that $F^i(\omega(B_{0})) = \omega(B_{0})$ for all $i \in \mathbb{Z}$. We'll also need the following proposition 
(see \cite{T1} proposition $1$ items $3, 4$ and proposition $2$ items $4,5$). 
The equality is not covered in \cite{T1} but can be deduced from the same arguments. 

\begin{prop}  \label{vert inv}
The sets $B_0^C, B_{\pi }^C, \omega(B_{0})^C$ and $ \omega(B_{\pi})^C$  
satisfy the following properties. 
\begin{enumerate}
\item{Each of the sets $B_0^C, B_{\pi }^C, \omega(B_{0})^C$ and $ \omega(B_{\pi})^C$  has a single connected component. }
\item{If $(p, q) \in \mathbb{Z}^2$ with $p \geq 0$ then $B_{0} + (p, q) \subseteq B_{0}$ 
and  $B_{\pi} + (- p, q) \subseteq B_{\pi}$. Furthermore, 
$B_{0} + (0, q) = B_{0}$ and $B_{\pi} + (0, q) = B_{\pi}$ for all $q \in \mathbb{Z}$.}
\item{If $(p, q) \in \mathbb{Z}^2$ with $p \geq 0$ then $\omega(B_{0}) + (p, q) \subseteq \omega(B_{0})$ and  $\omega(B_{\pi}) + (-p, q) \subseteq \omega(B_{\pi})$. Furthermore, 
$\omega(B_{0}) + (0, q) = \omega(B_{0})$ and $\omega(B_{\pi}) + (0, q) = \omega(B_{\pi})$ 
for all $q \in \mathbb{Z}$.}
\end{enumerate}
\end{prop}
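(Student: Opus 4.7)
The plan is to split the statement into three pieces: the topological claim (1), the inclusions in (2) and (3), and the equalities in (2) and (3). The claim (1) and the various inclusions are exactly the content of Propositions 1 and 2 in \cite{T1}, applied to both $B_0,\omega(B_0)$ and to their ``mirror'' versions $B_\pi,\omega(B_\pi)$; since the author explicitly invokes those results, I would simply cite them, leaving as my real task the equalities $B_0+(0,q)=B_0$, $B_\pi+(0,q)=B_\pi$ and their $\omega$-limit counterparts, which \cite{T1} does not cover.

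For the equality $B_0+(0,q)=B_0$, I would combine two simple observations. First, $F$ is a lift of a torus homeomorphism, so $F\circ T_v=T_v\circ F$ for every $v\in\mathbb{Z}^2$, where $T_v$ denotes translation by $v$; extending to the one-point compactification, $\widehat{F}$ commutes with the translation $T_{(0,q)}$, which fixes $\infty$. Second, the vertical half-plane is literally invariant under vertical translation: $V_0^+ + (0,q) = V_0^+$, and therefore $\widehat{V_0^+}+(0,q)=\widehat{V_0^+}$. Putting these together, $\widehat{F}^n(\widehat{V_0^+})+(0,q)=\widehat{F}^n(\widehat{V_0^+})$ for every $n\leq 0$, so the intersection $\bigcap_{n\leq 0}\widehat{F}^n(\widehat{V_0^+})$ is $T_{(0,q)}$-invariant. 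Since $\widehat{B_0}$ is characterized as the connected component of this intersection containing $\infty$, and $T_{(0,q)}$ is a homeomorphism of $S^2$ that fixes $\infty$, the image $T_{(0,q)}(\widehat{B_0})$ is again a connected subset of the intersection containing $\infty$; hence $T_{(0,q)}(\widehat{B_0})\subseteq\widehat{B_0}$. Applying the inverse translation gives the reverse inclusion. The same argument works for $B_\pi$ since $V_\pi^+$ is also vertically invariant.

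For the $\omega$-limit equalities in (3), I would use the identity $\omega(B_0)=\bigcap_{i\geq 1}F^i(B_0)$ already recorded in the excerpt. Because $F$ commutes with $T_{(0,q)}$ and $B_0+(0,q)=B_0$, each $F^i(B_0)$ satisfies $F^i(B_0)+(0,q)=F^i(B_0+(0,q))=F^i(B_0)$, and the invariance passes to the intersection. The same reasoning handles $\omega(B_\pi)$.

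The main obstacle, strictly speaking, lies in (1): verifying that the specific connected component $\widehat{B_0}$ does not separate $S^2$ (and likewise for $\widehat{B_\pi}$ and for the $\omega$-limits) is a delicate topological point about nested ``generalized half-planes'' in $S^2$ that is carried out in \cite{T1}. Conditional on that result, the additional translation properties, and in particular the new equalities that the author needs, are essentially formal consequences of the commutation $\widehat{F}\circ T_{(0,q)}=T_{(0,q)}\circ\widehat{F}$ and the vertical invariance of the half-planes $V_0^+$ and $V_\pi^+$.
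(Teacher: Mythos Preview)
Your proposal is correct and matches the paper's approach: the paper does not give a proof but simply cites \cite{T1} for items (1)--(3) and remarks that ``the equality is not covered in \cite{T1} but can be deduced from the same arguments.'' Your explicit derivation of the equalities from the commutation $F\circ T_{(0,q)}=T_{(0,q)}\circ F$ and the vertical invariance of $V_0^+,V_\pi^+$ is exactly the kind of argument the authors have in mind, and in fact spells out more than the paper does.
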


\begin{prop} 
If $\rho(F) = \{0\} \times [a, b]$ then $\omega(B_{0})$ and $\omega(B_{\pi})$ are both non-empty.
\end{prop}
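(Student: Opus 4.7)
The strategy is to exhibit a non-empty, closed, fully $F$-invariant subset $S \subseteq B_0$: if $F(S) = S$, then $S = F^i(S) \subseteq F^i(B_0)$ for every $i \geq 0$, whence $S \subseteq \omega(B_0)$. The same idea will handle $\omega(B_\pi)$ symmetrically. So the plan reduces to constructing such an $S$ in each of the two dynamical cases distinguished in the proof of Lemma \ref{nonempty}.

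A close look at Lemma \ref{nonempty} already provides such an $S$ in the ``bounded horizontal displacement'' case. There the set
\[
K_\infty = \bigcap_{l\geq 0}\bigcup_{i\geq l} F^{-i}\bigl(V_0^+ + (M,0)\bigr)
\]
is shown to be contained in $B_0$. The inner unions are nested decreasing in $l$, and applying $F$ and reindexing $j = i-1$ gives $F(K_\infty) = \bigcap_{l\geq -1}\bigcup_{j\geq l} F^{-j}(V_0^+ + (M,0))$, where the extra $l=-1$ term is a superset of the rest and thus does not change the intersection. Hence $F(K_\infty) = K_\infty$, so $K_\infty$ is fully $F$-invariant and contained in $\omega(B_0)$.

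In the complementary case --- when $F$ has arbitrarily large horizontal displacement on some orbits --- I would use Lemma 1 (Franks): since $\rho(F) = \{0\}\times[a,b]$ is non-degenerate and necessarily contains $(0,0)$, it also contains rational points $(0,p/q)$ with $p>0$ realized by periodic points $y_0$ satisfying $F^q(y_0) = y_0 + (0,p)$. The full orbit of $y_0$ under $F$ is horizontally bounded (by the $F^q$-periodicity modulo vertical translation), so for a large integer $N$ the shifted orbit of $y_0 + (N,0)$ lies in $V_0^+$ with all forward iterates also in $V_0^+$. The candidate for $S$ is then the closure of the assembly of these periodic orbits together with all their rightward horizontal and vertical integer translates, combined with the $F$-invariant curve produced by the stretching argument (Lemma 1 of \cite{T1}) that is invoked to complete the unbounded case of Lemma \ref{nonempty}.

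The main obstacle is the connectedness requirement: a single periodic orbit is a discrete set, and the characterization of $B_0$ as the union of \emph{connected} unbounded closed sets whose forward iterates lie in $V_0^+$ cannot be satisfied by discrete orbits alone. One therefore expects to need the curve-stretching construction from \cite{T1} (together with $F$-invariance and closure under integer translations) to glue the periodic orbits into a bona fide connected unbounded fully $F$-invariant subset of $V_0^+$. For $\omega(B_\pi)$, the construction is entirely symmetric, using rationals $(0,p/q) \in \rho(F)$ with $p<0$ and the leftward-translate invariance of $B_\pi$ from Proposition \ref{vert inv}.
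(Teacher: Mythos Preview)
The paper's proof is a one-line citation: Corollary~1 of \cite{T1} asserts that if $\omega(B_0)=\emptyset$ then $\rho(F)\cap\mathrm{int}(V_0^+)\neq\emptyset$, which is impossible when $\rho(F)\subset\{0\}\times\mathbb{R}$; likewise for $\omega(B_\pi)$.

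Your constructive strategy is different and works cleanly in the bounded-displacement case: your check that $F(K_\infty)=K_\infty$ is correct, and this alone already reproves the result under the extra hypothesis that horizontal displacements are uniformly bounded above. The unbounded case, however, has a genuine gap that you yourself identify but do not close. The set produced by ``Lemma~1 of \cite{T1}'' in the proof of Lemma~\ref{nonempty} is a Hausdorff limit of stretched boundary arcs; it is connected, unbounded, and contained in $B_0$, but there is no reason for it to be \emph{fully} $F$-invariant --- it only inherits the forward invariance $F(B_0)\subset B_0$, which is not enough for your strategy. Your attempt to repair this with Franks periodic orbits fails for the reason you state (discreteness), and ``gluing'' them to the stretched curve does not help: a union of an $F$-periodic set with a merely forward-invariant set is still not $F$-invariant, and taking closures or adding integer translates does not manufacture the missing backward invariance. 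In short, in the unbounded case you have not exhibited any nonempty $S\subset B_0$ with $F(S)=S$, and to conclude you would need precisely the content of Corollary~1 of \cite{T1}, which is what the paper invokes directly.
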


\begin{proof}
By corollary $1$ in \cite{T1} if $\omega(B_{0})$ (respectively $\omega(B_{\pi})$) was empty 
we would have that $\rho(F) \cap int(V_{0}^{+}) \neq \emptyset$ 
(respectively  $\rho(F) \cap int(V_{\pi}^{+}) \neq \emptyset$), a contradiction. 
\end{proof}

With this information we can divide the proof in two proper cases, namely, either 
$\pi (\omega(B_{0})) \cap \pi(\omega(B_{\pi})) = \emptyset$ or 
$\pi (\omega(B_{0})) \cap \pi(\omega(B_{\pi})) \neq \emptyset$. 
In both cases if Theorem $2$ isn't true we'd obtain contradictions with properties of the sets 
$\omega(B_{0})$ and $\omega(B_{\pi})$. 
To achieve this we'll first prepare some statements on what happens when Theorem $2$ fails. 


\section{Proof of Theorem $2$, initial claims} 


The proof of Theorem $2$ will be done by contradiction so henceforth we assume that 
Theorem $2$ is not true. 
Define the set
\begin{displaymath}
A := (\omega(B_{0}) \cup (\omega(B_{\pi}) + (p, q)) )^C
\end{displaymath}
This set depends a priori of $p$ and $q$, but we have in general the following result. 

\begin{prop} \label{A nao vazio}
Let $M$ be a positive real number. 
Then there is $x_M \in \mathbb{R}^2$ and $n(x_M) \in \mathbb{Z}$ such that 
\begin{displaymath}
(x_M)_1 < - M \qquad \text{and} \qquad (F^{n(x_M)} (x_M))_1 \geq M 
\end{displaymath}
In particular if $(p, q) \in \mathbb{Z}^2$ is given we can take $M > 0$ such that $x_M \in A$. 
\end{prop}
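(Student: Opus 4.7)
The plan is to exploit the standing contradiction hypothesis (that Theorem 2 fails) to produce large horizontal excursions, and then use the fact that $F$ commutes with integer translations to reposition the orbit so that its starting point lies strictly to the left of $-M$ while some iterate lands strictly to the right of $M$. The placement in $A$ will then follow essentially for free from the horizontal confinement of $\omega(B_0)$ and $\omega(B_\pi)$ together with the complete $F$-invariance of $\omega(B_\pi)+(p,q)$.

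First I would read the negation of Theorem 2: for every $M'>0$ there are $y\in\mathbb{R}^2$ and $n\in\mathbb{Z}$ with $\lvert(F^n(y)-y)_1\rvert>M'$. Choosing $M':=2M+2$, I want to upgrade this to a \emph{positive} horizontal displacement. If the displacement is already positive we keep the pair; otherwise we replace $(y,n)$ by $(y',n'):=(F^n(y),-n)$, which satisfies $(F^{n'}(y')-y')_1=(y-F^n(y))_1>M'$. So one may assume $(F^n(y)-y)_1>2M+2$.

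Next I translate horizontally by an integer. Since $F$ commutes with integer translations, $F^n(y+(k,0))=F^n(y)+(k,0)$, so the conditions $(y+(k,0))_1<-M$ and $(F^n(y)+(k,0))_1\geq M$ read
\[
k\in\bigl[\,M-(F^n(y))_1,\;-M-(y)_1\,\bigr),
\]
an interval of length $(F^n(y)-y)_1-2M>2$, which certainly contains an integer $k$. Setting $x_M:=y+(k,0)$ and $n(x_M):=n$ gives the first half of the proposition.

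For the ``in particular'' part, given $(p,q)\in\mathbb{Z}^2$ I require additionally $M>\lvert p\rvert$. Since $(x_M)_1<-M<0$ and $\omega(B_0)\subseteq B_0\subseteq V_0^+$, we get $x_M\notin\omega(B_0)$. Since $(F^{n(x_M)}(x_M))_1\geq M>p$ and $\omega(B_\pi)\subseteq V_\pi^+$, translation yields $\omega(B_\pi)+(p,q)\subseteq\{z:(z)_1\leq p\}$, so $F^{n(x_M)}(x_M)\notin\omega(B_\pi)+(p,q)$. But $\omega(B_\pi)$ is completely $F$-invariant and $F$ commutes with $(p,q)$, hence $\omega(B_\pi)+(p,q)$ is completely $F$-invariant, and therefore $x_M\notin\omega(B_\pi)+(p,q)$ as well. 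Thus $x_M\in A$.

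The only genuinely delicate point is the first step, where one must remember that the failure of Theorem 2 controls the absolute value $\lvert(F^n(x)-x)_1\rvert$ rather than the signed displacement, and that rightward unboundedness must therefore be extracted either directly or by the time-reversal trick; everything else is routine bookkeeping with the integer-translation symmetry of the setting.
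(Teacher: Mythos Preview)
Your argument is correct and follows essentially the same approach as the paper: invoke the contradiction hypothesis to produce a large horizontal displacement, repair the sign by time-reversal, translate by an integer to place the start left of $-M$ and an iterate right of $M$, and then use $\omega(B_0)\subseteq V_0^+$ together with the invariance of $\omega(B_\pi)+(p,q)$ to land in $A$. The only cosmetic differences are that the paper uses the explicit shift $x-(\lfloor(x)_1\rfloor+M+1,0)$ where you use an interval-length argument to find $k$, and the paper appeals to the forward-orbit characterization $(*)$ of $B_\pi$ where you invoke complete $F$-invariance of $\omega(B_\pi)+(p,q)$; your version is arguably tidier since it does not require tracking the sign of $n(x_M)$, and your choice $M>|p|$ avoids the implicit assumption $p\geq 0$ in the paper's $M=p+1$.
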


\begin{proof}
By the contradiction hypothesis there exists $x \in \mathbb{R}^2$ and $n(x)$ such that $ |(F^{n(x)}(x) -x )_1|   \geq 2M + 1$. 
Assume that $(F^{n(x)}(x) - x )_1 > 0$ so that $|(F^{n(x)}(x) -x )_1| =  (F^{n(x)}(x) -x )_1$. 

Since $0 \leq (x)_1 - \lfloor (x)_1 \rfloor < 1$. The point 
$x_0 := x - (\lfloor (x)_1 \rfloor + M + 1, 0)$ satisfies 
\begin{displaymath}
- M - 1 \leq  (x_0)_1  < - M
\end{displaymath}

Since $(x_0)_1  < - M$ we know $x_0 \notin V_{0}^{+}$ so clearly $x_0 \notin \omega(B_{0})$ . 

But, since $(\lfloor (x)_1 \rfloor + M + 1, 0) \in \mathbb{Z}^2$, we know that 
\begin{displaymath}
 (F^{n(x)}(x_0) - x_0)_1  =  (F^{n(x)}(x) - x)_1  \geq 2M + 1 
\end{displaymath}
which, in turn, implies that 
\begin{displaymath}
 (F^{n(x)}(x_0))_1  \geq 2M + 1 + (x_0)_1 \geq 2M + 1 - M - 1 = M 
\end{displaymath}

Therefore $x_0$ is also not in $(\omega(B_{\pi}) + (M - 1, q))$ 
since one of its iterates has first coordinate greater of equal than $M$ (see (*) in the previous section 
and recall that $\omega(B_{\pi}) + (M - 1, q) \subseteq V_{\pi}^+ + (M - 1,q)$). 
Taking in particular $M = p + 1$ we conclude that $x_0 \notin (\omega(B_{\pi}) + (p, q))$ and   
therefore $x_0 \in A$.

For the proof in the case $(F^{n(x)}(x) - x )_1 < 0$ it's enough to define 
$y := F^{n(x)}(x) $ so that $(F^{-n(x)} (y) - y )_1 > 0$. 
\end{proof}

\begin{prop} \label{ilim horiz} 
All connected components of 
$\omega(B_{0})$ and $\omega(B_{\pi})$ are horizontally unbounded. 
\end{prop}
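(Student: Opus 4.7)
By the symmetry between $V_0^+$ and $V_\pi^+$, it suffices to prove the claim for $\omega(B_0)$. I argue by contradiction. Suppose $C$ is a connected component of $\omega(B_0)$ that is horizontally bounded, so that $C \subseteq [c_1,c_2] \times \mathbb{R}$ for some $0 \leq c_1 \leq c_2 < \infty$; then $C$ is vertically unbounded, because it is unbounded by the results cited just before Proposition~\ref{vert inv}. The plan is to show that $\omega(B_0) \supseteq \{(x)_1 > c_2\}$ and then to invoke Proposition~\ref{A nao vazio} with $M = c_2 + 1$: this will produce $x_M$ with $(x_M)_1 < -M < 0$ and $(F^{n(x_M)}(x_M))_1 \geq M > c_2$, so that $F^{n(x_M)}(x_M) \in \omega(B_0)$ and, by the $F$-invariance of $\omega(B_0)$, $x_M \in \omega(B_0) \subseteq V_0^+$, contradicting $(x_M)_1 < 0$.

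To prove the inclusion, I would exploit the vertical $\mathbb{Z}$-invariance given by Proposition~\ref{vert inv}: each translate $C + (0,q)$ is a connected subset of $\omega(B_0)$, and a short maximality argument (if $C + (0,q)$ met $C$ then $C+(0,q) \subseteq C$, and applying the same argument to $C+(0,-q)$ forces equality) shows that for every $q \in \mathbb{Z}$ either $C + (0,q) = C$ or $C + (0,q)$ is disjoint from $C$. I would then pass to the vertical annulus $\mathbb{A} := \mathbb{R} \times (\mathbb{R}/\mathbb{Z})$ via the covering $p \colon \mathbb{R}^2 \to \mathbb{A}$. The $\mathbb{Z}$-saturation $C^* := \bigcup_{q \in \mathbb{Z}}(C + (0,q))$ lies inside $\omega(B_0) \cap ([c_1,c_2] \times \mathbb{R})$, and because $C$ is vertically unbounded the projection of $C^*$ is a connected subset of $[c_1,c_2] \times (\mathbb{R}/\mathbb{Z})$ which surjects onto $\mathbb{R}/\mathbb{Z}$. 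A covering-space analysis (case split on whether some integer $m \geq 1$ satisfies $C + (0,m) = C$) would then exhibit $p(C^*)$ as a compact connected set carrying a nontrivial class in $H_1(\mathbb{A}) = \mathbb{Z}$, so $p(C^*)$ separates $\mathbb{A}$ into a region carrying the end $x_1 \to -\infty$ and a region carrying the end $x_1 \to +\infty$.

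Lifting this separation, $\overline{C^*}$ separates $\mathbb{R}^2$ into a region containing $\{(x)_1 < c_1\}$ and one containing $\{(x)_1 > c_2\}$. Because $\omega(B_0)^C$ is connected by Proposition~\ref{vert inv} and contains the open left half-plane $\{(x)_1 < 0\}$ (since $\omega(B_0) \subseteq V_0^+$), it must lie entirely in the first region, giving $\{(x)_1 > c_2\} \subseteq \omega(B_0)$ and completing the argument as described in the first paragraph.

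The principal obstacle is the separation/covering-space step, especially in the sub-case where all translates $C + (0,q)$ with $q \neq 0$ are disjoint from $C$: there one must argue that the unboundedness of $C$ rules out $p(C^*)$ being a tree-like set with no separating loop, and verify that the standard planar-separation statement for closed connected $m\mathbb{Z}$-invariant subsets of a vertical strip applies despite the possibility that $C$ is not locally connected. I expect these technicalities to be handled by adaptations of the planar-topology arguments used in~\cite{T1}.
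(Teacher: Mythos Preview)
Your proposal follows the paper's proof exactly: assume a component $C$ of $\omega(B_0)$ is horizontally bounded (hence vertically unbounded), use the vertical $\mathbb{Z}$-invariance of $\omega(B_0)$ (Proposition~\ref{vert inv}) so that $\bigcup_{i\in\mathbb{Z}}(C+(0,i))$ separates a left half-plane $L$ from a right half-plane $R$, conclude from the connectedness of $\omega(B_0)^C\supset L$ that $R\subset\omega(B_0)$, and then invoke Proposition~\ref{A nao vazio} to obtain a point of $\omega(B_0)^C$ with an iterate in $R\subset\omega(B_0)$, contradicting complete invariance. The only divergence is at the separation step: the paper dispatches it in one line (``Clearly \ldots\ separates the sets $R$ and $L$''), whereas you pass to the annulus $\mathbb{R}\times(\mathbb{R}/\mathbb{Z})$, invoke $H_1$, and split into cases according to whether $C$ has a vertical period---this machinery is unnecessary, and the subcase you flag as delicate (all nonzero vertical translates of $C$ disjoint from $C$) requires no special treatment beyond the elementary plane-separation fact the paper is taking for granted.
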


\begin{proof}
We'll show that all connected components of $\omega(B_{0})$ and 
$\omega(B_{\pi})$ cannot be vertically unbounded and horizontally bounded. 
In particular, since we know that these connected components are unbounded, 
we'll conclude that all connected components of $\omega(B_{0})$ and $\omega(B_{\pi})$
are horizontally unbounded (note that, since $\omega(B_{0}) \subseteq B_{0}$, any connected 
component of $\omega(B_{0})$ can only be unbounded rightward, just as  any connected 
component of $\omega(B_{\pi})$ can only be unbounded leftward).   

Suppose that a connected component $C$ of $\omega(B_{0})$ is vertically unbounded and horizontally bounded.  
Since $A$ is invariant by vertical translation,  for  $i \in \mathbb{Z}$,  
$C + (0, i)$ is also a connected component of $\omega(B_{0})$ and is also  
vertically unbounded and horizontally bounded. 

Clearly if $M \geq \sup_{x \in C} | (x)_1|$ then $\cup_{i = - \infty}^{+ \infty} (C + (0, i))$ 
separates the sets  
$R := \{ x \in \mathbb{R}^2 \; | \; (x)_1 \geq M  \}$ and 
$ L := \{ x \in \mathbb{R}^2 \; | \; (x)_1 \leq - M  \}$. 
We know that $\omega(B_{0})^C$ has a single connected component, and since $L\subset (V_0^{+})^C\subset \omega(B_{0})^C$,
it follows that $R\subset \omega(B_{0})$.

 But the previous proposition implies there is a point of $L\subset\omega(B_{0})^C$ that has one of its iterates 
with first coordinate greater or equal than
$M + 1$ and therefore belongs to $R$. 
But this is a contradiction since $\omega(B_{0})^C$ is completely invariant. 

The case where $R$ is completely contained in $\omega(B_{0})^C$ is analogous: 
proceeding as in the previous proposition  take $y := F^{n(x)} (x) $. 
The proof for the connected components of $\omega(B_{\pi})$ is also analogous. 
\end{proof}

We will now examine the two different possibilities, first the case where 
$\pi (\omega(B_{0})) \cap \pi(\omega(B_{\pi})) \neq \emptyset$ and second the case 
where $\pi (\omega(B_{0})) \cap \pi(\omega(B_{\pi})) = \emptyset$.


\section{The case where $\pi (\omega(B_{0})) \cap \pi(\omega(B_{\pi})) \neq \emptyset$ leads to a contradiction} 


In this section we prove that, since we're assuming that  Theorem $2$ is not true, 
we cannot have $\pi (\omega(B_{0})) \cap \pi(\omega(B_{\pi})) \neq \emptyset$. We start 
noticing that if $\pi( \omega(B_{0}) ) \; \cap \; \pi(\omega(B_{\pi})) \neq \emptyset$ then there 
are $(p_1, q_1), (p_2, q_2) \in \mathbb{Z}^2$ such that 
\begin{displaymath}
\big( \omega(B_{0}) + (p_1, q_1) \big) \cap \big( \omega(B_{\pi}) + (p_2, q_2) \big) \neq \emptyset
\end{displaymath}
Let $(p,q) = (p_2 - p_1, q_2 - q_1)$. By the hypothesis there is $z \in \mathbb{R}^2$ such that 
\begin{displaymath}
z \in \omega(B_{0}) \cap \big( \omega(B_{\pi}) + (p, q) \big)
\end{displaymath}
We'll need the following results. 

\begin{claim} 
If $O$ is a connected component of 
\begin{displaymath}
A = (\omega(B_{0}) \cup (\omega(B_{\pi}) + (p, q)) )^C
\end{displaymath}
then $O + (0, i)$ is a connected component of $A$ for all $i \in \mathbb{Z}$.
\end{claim}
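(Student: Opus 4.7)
The plan is to derive this essentially directly from Proposition \ref{vert inv}. First, I would invoke item (3) of Proposition \ref{vert inv}, which says that both $\omega(B_{0})$ and $\omega(B_{\pi})$ are invariant under integer vertical translations. Since $(\omega(B_{\pi}) + (p,q)) + (0,i) = (\omega(B_{\pi}) + (0,i)) + (p,q) = \omega(B_{\pi}) + (p,q)$, the translated set $\omega(B_{\pi}) + (p,q)$ also inherits this vertical translation invariance. Taking the union and then the complement, I would conclude that $A + (0, i) = A$ for every $i \in \mathbb{Z}$.

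Next, since translation by $(0,i)$ is a homeomorphism of $\mathbb{R}^2$ that permutes $A$ with itself, it must permute the connected components of $A$. More concretely, given a component $O$ of $A$, the set $O + (0,i)$ is connected and contained in $A$, so it lies inside some connected component $O'$ of $A$. Applying the same reasoning to the translation by $(0, -i)$, the set $O' + (0, -i)$ is a connected subset of $A$ that contains $O + (0,i) + (0,-i) = O$; because $O$ is itself a maximal connected subset of $A$, we must have $O' + (0, -i) \subseteq O$, i.e., $O' \subseteq O + (0, i)$. Combined with the reverse inclusion, this gives $O' = O + (0, i)$, which is exactly the claim.

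There is no serious obstacle here: the argument is purely formal once the vertical translation invariance of both $\omega(B_{0})$ and $\omega(B_{\pi})$ is in hand, and that is precisely what item (3) of Proposition \ref{vert inv} provides. The only thing worth being careful about is to verify that translating $\omega(B_{\pi}) + (p,q)$ by $(0,i)$ does not introduce an unwanted horizontal shift, which is immediate from the fact that $(p,q) + (0,i) = (p, q+i)$ and that $\omega(B_{\pi})$ is invariant under $(0, i)$.
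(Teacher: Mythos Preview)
Your argument is correct and follows essentially the same route as the paper: invoke Proposition~\ref{vert inv} to get $A^C + (0,i) = A^C$, hence $A + (0,i) = A$, and conclude that vertical integer translations permute the connected components of $A$. The paper's version is terser and leaves implicit the component-permutation step that you spell out, but there is no substantive difference.
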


\begin{proof}
By proposition $\ref{vert inv}$ we have
$A^C + (0, i) = A^C$  for all $i \in \mathbb{Z}$. 
This implies that $A + (0, i) = A$ for all $i \in \mathbb{Z}$, 
which proves the desired result.
\end{proof}

\begin{figure}[t]
\includegraphics[scale=0.75]{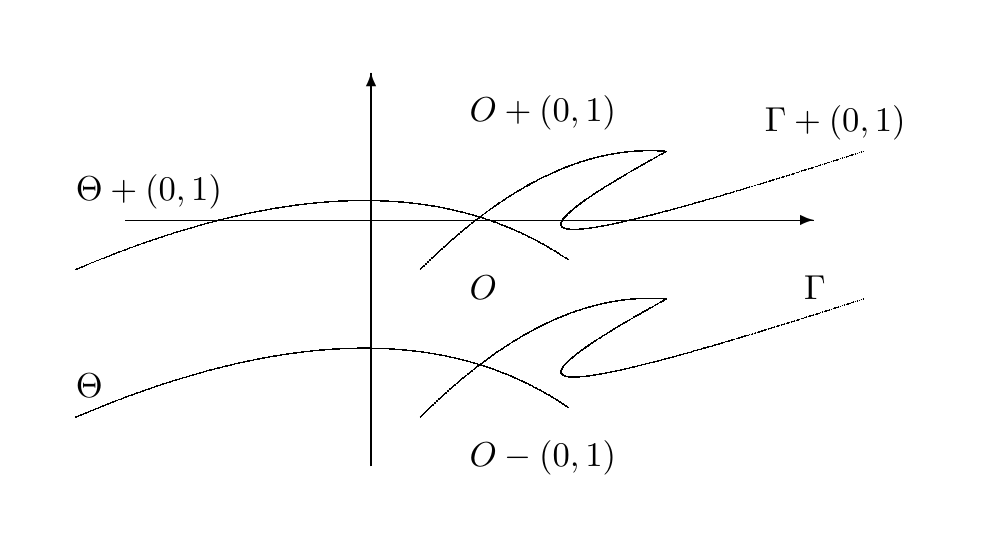}
\caption{$\Gamma$ and $\Gamma + (0,1)$ are connected components of $\omega(B_0) \subseteq V_0^+$ and 
$\Theta$ and $\Theta + (0,1)$ are connected components of $\omega(B_\pi) + (p,q) \subseteq V_{\pi}^+  + (p,q)$.   
The sets $O, O + (0,1)$ and $O - (0,1)$ are different connected components of $A$}
\end{figure}

This next claim is illustrated in Figure $1$. 

\begin{claim} \label{diffcc}
Let $O$ be a connected component of $A$ and let $x \in O$. 
Then $(x + (0,i)) \notin O$ for all $i \in \mathbb{Z}^*$. 
In particular, $O$ and $O + (0,i)$ are distinct connected component of $A$ for all 
$i \in \mathbb{Z}^*$.
\end{claim}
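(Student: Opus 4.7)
I will argue by contradiction: suppose there is $i \in \mathbb{Z}^*$ with $x + (0,i) \in O$, and by symmetry assume $i > 0$. Since $O$ is an open connected subset of $\mathbb{R}^2$ it is path connected, so let $\gamma : [0,1] \to O$ be a continuous path with $\gamma(0) = x$ and $\gamma(1) = x + (0,i)$, and let $a < b$ be such that $[\gamma] \subseteq [a,b] \times \mathbb{R}$. By Proposition~\ref{vert inv}, $A$ is invariant under every integer vertical translation, so each translate $\gamma + (0,ki)$, $k \in \mathbb{Z}$, is a path in $A$ from $x + (0,ki)$ to $x + (0,(k+1)i)$. Concatenating all these translates produces a continuous proper map $\phi : \mathbb{R} \to A$ whose image $\Lambda$ is closed, connected, contained in $[a,b] \times \mathbb{R}$, and vertically unbounded in both directions, since $\phi(t)_2 \to \pm\infty$ as $t \to \pm\infty$.

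On the other side, let $\Gamma_z$ and $\Theta_z$ denote the connected components of $z$ in $\omega(B_0)$ and in $\omega(B_{\pi}) + (p,q)$, respectively. Since $\Gamma_z \subseteq V_0^+$ and $\Theta_z \subseteq V_{\pi}^+ + (p,q)$, Proposition~\ref{ilim horiz} forces $\Gamma_z$ to be unbounded rightward and $\Theta_z$ to be unbounded leftward. Their union $Z := \Gamma_z \cup \Theta_z$ is therefore a connected subset of $A^C$ (the two pieces meet at $z$) which contains points with first coordinate less than $a$ as well as points with first coordinate greater than $b$; moreover, $Z \cap \Lambda = \emptyset$ because $\Lambda \subseteq A$ and $Z \subseteq A^C$.

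The contradiction will now follow from the plane-topology fact that $\Lambda$ separates $\mathbb{R}^2$: any closed connected subset of $\mathbb{R}^2$ that is horizontally bounded and vertically unbounded in both directions has a complement with at least two connected components, with points of sufficiently small first coordinate lying in a different component from points of sufficiently large first coordinate. Granting this, the connected set $Z \subseteq \mathbb{R}^2 \setminus \Lambda$ cannot meet both such components, yielding the required contradiction. The ``in particular'' portion of the claim is then immediate: the previous claim shows that $O + (0,i)$ is a connected component of $A$ containing $x + (0,i)$, and if $O + (0,i) = O$ we would have $x + (0,i) \in O$, contrary to what was just established. The main obstacle I anticipate is the rigorous justification of the plane-separation fact. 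A clean route passes through the cylinder $C = \mathbb{R}^2 / \mathbb{Z}(0,i)$: if $\gamma$ is chosen to be a simple arc with $\gamma \cap (\gamma + (0,ki)) = \emptyset$ for all $k \neq 0$ (achievable by a small perturbation inside the open set $O$), then the projection of $\Lambda$ to $C$ is an embedded essential loop generating $\pi_1(C) = \mathbb{Z}$, hence separates $C$ into two half-cylinders; lifting this separation back to $\mathbb{R}^2$ gives the required separation of $\mathbb{R}^2 \setminus \Lambda$.
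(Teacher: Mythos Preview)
Your argument is correct and follows essentially the same route as the paper: concatenate vertical integer translates of a path in $A$ joining $x$ to $x+(0,i)$ to obtain a horizontally bounded, vertically bi-unbounded connected subset of $A$, then observe that the connected set $\Gamma_z \cup \Theta_z \subset A^C$ runs from far left to far right and so must cross it. The paper simply asserts the separation step without your cylinder detour; if you want a clean justification, note that no perturbation to a simple arc is needed, since the projected loop in $\mathbb{R}^2/\mathbb{Z}(0,i)$ is essential and any essential loop---embedded or not---separates the two ends of a cylinder by an elementary intersection-number argument.
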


\begin{proof}
Define, for all $i \in \mathbb{Z}$,
\begin{displaymath}
y(i) := x + i(0, 1)
\end{displaymath}
Observe that $y(i) \in A$ for all $i \in \mathbb{Z}$. 
We'll show that every point $y(i)$ must be in a different connected component of $A$. 

Suppose this is not the case. In particular, there are $i_1 < i_2$ such that $y(i_1)$ and 
$y(i_2)$ are in the same connected component of $A$. 
Let $\gamma$ be a curve connecting $y(i_1)$ and $y(i_2)$ with $[\gamma] \subset A$. 
Then the curves $\gamma + (0, i)$ also have their images in $A$, for all $i \in \mathbb{Z}$. 

The set $\gamma := \cup_{k \in \mathbb{Z} } ([\gamma] + k(0, i_2 - i_1))$ is connected, 
vertically unbounded (both upward and downward), horizontally bounded and is contained in $A$. 
Note that $\gamma$ separates the sets
$R := \{ x \in \mathbb{R}^2 \; | \; (x)_1 > M  \}$ and
 $ L := \{ x \in \mathbb{R}^2 \; | \; (x)_1 < - M  \}$ for any $M$ greater than the horizontal bound for  $\gamma$. 

By proposition $\ref{ilim horiz}$, the connected components of $\omega(B_{0})$ and 
$\omega(B_{\pi})$ are horizontally unbounded. 
Let $z \in \omega(B_{0}) \cap (\omega(B_{\pi}) + (p, q))$. Let $\Gamma$ be the connected component of
 $\omega(B_{0})$ that contains $z$ and 
$\Theta$ the connected component of $\omega(B_{\pi}) + (p,q)$ that contains $z$. 
Since $\omega(B_{0}) \subseteq V_{0}^{+}$, $\omega(B_{\pi}) \subseteq V_{\pi}^{+}$ we conclude that 
$\Lambda := \Gamma \cup \Theta$ is connected, unbounded rightward, unbounded leftward
 and $\Lambda \cap A = \emptyset$, but this leads to a contradiction since $\Lambda \cap \beta \neq \emptyset$. 
\end{proof}

Since the Lebesgue measure on the torus $\lambda$ is ergodic with respect to $f$, we have that for $\lambda$-almost all $x \in \mathbb{T}^2$   
\begin{displaymath}
\lim_{n \to \infty } \frac{F^n \circ\pi^{-1} (x) - \pi^{-1}(x)}{n} = \int_{x \in \mathbb{T}^2} (F\circ\pi^{-1} (x) - \pi^{-1} (x)) d\lambda  = (0, \alpha)
\end{displaymath}
that is, for $\lambda$-almost every point the pointwise rotation vector exists and 
is of the form $(0, \alpha)$ for some $\alpha$ that we assumed irrational. 

Let $x \in A$ and let $O$ be the connected component of $A$ that contains $x$. 
Since $A$ is open there is $\varepsilon > 0$ such that $B(x; \varepsilon) \subseteq O$. 
Observe that,  for all $\varepsilon > 0$ we have $\lambda(\pi( B(x; \varepsilon) ) ) > 0$. 
Let $g := (F \circ\pi^{-1} - \pi^{-1})_1$. Then,  
by Atkinson's lemma, there is a sequence $n_j \xrightarrow{j \to \infty} \infty$ and $p \in \pi( B(x; \varepsilon))$ such that for 
$y \in \pi^{-1}(p)$ we have that $f^{n_j} (p) \xrightarrow{j \to \infty} p$ and 
\begin{displaymath}
(F^{n_j}( y)  - y)_1 \xrightarrow{j \to \infty} 0
\end{displaymath}
We can assume that $\rho_{p} (F, y)=(0,\alpha)$, since this holds $\lambda$-almost everywhere. 
Also note that for some $k_j \in \mathbb{Z}$ we have $F^{n_j}(y) \in (B(x; \varepsilon) + (0,k_j))$. 
Since $F^{n_j}$ is continuous and $A$ is $F$-invariant, $F$ permutes connected components of $A$. 
By the previous claim 
\begin{displaymath}
F^{n_j}(O) = O + (0, k_j)
\end{displaymath}
which implies that, for all $s \in \mathbb{Z}$,
\begin{equation} \label{sobe}
F^{s n_j}(O) = O + s(0, k_j)
\end{equation}

\begin{claim}
There are $j_1, j_{2} \in \mathbb{N}$ such that  
$\frac{k_{j_1}}{n_{j_1}} \neq \frac{k_{j_2}}{n_{j_2}}$. 
\end{claim}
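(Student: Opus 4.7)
The plan is to extract from the setup that $k_j/n_j$ converges to the irrational number $\alpha$, which immediately rules out all terms being equal. Here is the more detailed outline.

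First I would unwrap the defining condition $F^{n_j}(y) \in B(x;\varepsilon) + (0, k_j)$: this means there is $u_j \in \mathbb{R}^2$ with $\|u_j\| < \varepsilon$ such that $F^{n_j}(y) = x + (0, k_j) + u_j$. Taking the second coordinate gives
\begin{displaymath}
(F^{n_j}(y) - y)_2 \;=\; (x - y)_2 \;+\; k_j \;+\; (u_j)_2.
\end{displaymath}
Here $(x-y)_2$ is a fixed real number and $|(u_j)_2| < \varepsilon$ is uniformly bounded, while $n_j \to \infty$.

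Next, I would divide by $n_j$ and pass to the limit. Since we chose $y$ so that the pointwise rotation vector satisfies $\rho_p(F,y) = (0,\alpha)$, the left-hand side divided by $n_j$ converges to $\alpha$. The terms $(x-y)_2/n_j$ and $(u_j)_2/n_j$ both tend to $0$, so
\begin{displaymath}
\lim_{j\to\infty}\frac{k_j}{n_j} \;=\; \alpha.
\end{displaymath}

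The conclusion is then essentially automatic: each $k_j/n_j$ is a rational number, while $\alpha \in \mathbb{R} \setminus \mathbb{Q}$. If the sequence $k_j/n_j$ took only one value $c$, we would have $c = \alpha$, contradicting the irrationality of $\alpha$. Hence the sequence must take at least two distinct values, which gives the required indices $j_1, j_2$.

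I do not expect a real obstacle here; the only point worth double-checking is that the $k_j$ appearing in the statement are well-defined integers with the correct first coordinate zero (i.e.\ that no horizontal integer translate is needed), but this follows from the Atkinson conclusion $(F^{n_j}(y)-y)_1 \to 0$ together with $p \in \pi(B(x;\varepsilon))$ for sufficiently small $\varepsilon$, so the entire step reduces to the limit computation above.
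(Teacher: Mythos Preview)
Your argument is correct and follows essentially the same route as the paper: bound the second coordinate $(F^{n_j}(y)-y)_2$ within $O(1)$ of $k_j$, divide by $n_j$, and use $\rho_p(F,y)=(0,\alpha)$ to conclude $k_j/n_j\to\alpha$, which is incompatible with a constant rational ratio. Your write-up is in fact slightly more careful than the paper's, since you track the bounded offset $(x-y)_2$ explicitly rather than absorbing it into the $\varepsilon$-bound.
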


\begin{proof}
Assume by contradiction that $\frac{k_{j_1}}{n_{j_1}} = \frac{k_{j_m}}{n_{j_m }}$ 
for all $m \in \mathbb{N}$. We then have 
\begin{displaymath}
k_{j_m} - \varepsilon  \leq (F^{n_{j_m}}(y) - y)_2 \leq k_{j_m} + \varepsilon
\end{displaymath}
Dividing by $n_{j_m}$ we can take the limit as $m \to \infty$ to see 
(since $\rho_{p} (F, y)$ exists) that 
\begin{displaymath}
\alpha = \lim_{m \to \infty} \frac{k_{j_m}}{n_{j_m}} = \frac{k_{j_1}}{n_{j_{1}}} 
\end{displaymath}
a contradiction, since $\alpha$ is irrational. 
\end{proof}

Substituting $n_{j_2}$ and $n_{j_1}$ in \eqref{sobe} we have
\begin{displaymath}
 O + n_{j_1}(0, k_{j_2}) = F^{n_{j_1} n_{j_2}}(O) = F^{n_{j_2} n_{j_1}}(O) = O + n_{j_2}(0, k_{j_1}) 
\end{displaymath}

But this leads to a contradiction: by claim $\ref{diffcc}$, since 
$\frac{k_{j_1}}{n_{j_1}} \neq \frac{k_{j_2}}{n_{j_2}}$, 
we must have that $O + n_{j_1}(0, k_{j_2}) \neq O + n_{j_2}(0, k_{j_1})$.
Therefore, we conclude that  the case 
$\pi (\omega(B_{0})) \cap \pi(\omega(B_{\pi})) \neq \emptyset$  
cannot be.


\section{The case where $\pi (\omega(B_{0})) \cap \pi(\omega(B_{\pi})) = \emptyset$  leads to a contradiction} 


Since the first possibility lead to a contradiction we examine now the remaining case. 
It's evident that if $\pi (\omega(B_{0})) \cap \pi(\omega(B_{\pi})) \neq \emptyset$, 
the distance between these sets is zero. 
Nevertheless, the same is still true if $\pi (\omega(B_{0})) \cap \pi(\omega(B_{\pi})) = \emptyset$. 

\begin{prop} \label{predense}
For every $z \in \mathbb{T}^2$ and all $\varepsilon > 0$, there is a connected set 
$K \subset \mathbb{R}^2$ 
vertically unbounded (both upward and downward), horizontally bounded and such that for all 
$y \in K$ there is $n(y) \leq 0$ such that
$\pi(F^{n(y)} (y) ) \in B( z; \varepsilon)$. 
\end{prop}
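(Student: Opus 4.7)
The condition that each $y\in K$ admit $n(y)\leq 0$ with $\pi(F^{n(y)}(y))\in B(z;\varepsilon)$ is equivalent to $K\subseteq W:=\bigcup_{n\geq 0}F^{n}(\pi^{-1}(B(z;\varepsilon)))$. The set $W$ is open and $\mathbb{Z}^{2}$-periodic, and by the ergodicity of $f$ it has full Lebesgue measure in $\mathbb{R}^{2}$, since $\lambda$-a.e.\ point of $\mathbb{T}^{2}$ has backward orbit meeting $B(z;\varepsilon)$. Fixing a lift $z_{0}$ of $z$ and writing $z_{k}:=z_{0}+(0,k)$, every ball $B(z_{k};\varepsilon)$ lies in $W$; thus the task reduces to linking the vertical stack $\{B(z_{k};\varepsilon)\}_{k\in\mathbb{Z}}$ by connected arcs in $W$ with uniformly bounded horizontal extent.

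To locate the recurrent returns needed for these linking arcs, I would apply Atkinson's Lemma with the observable $g(x):=(F(\pi^{-1}(x))-\pi^{-1}(x))_{1}$, whose integral against $\lambda$ equals $(\rho_{\lambda}(F))_{1}=0$. Combined with the fact that for $\lambda$-a.e.\ point the pointwise rotation vector equals $(0,\alpha)$ with $\alpha$ irrational, this produces a point $y\in B(z_{0};\varepsilon)$ and integers $n_{k}\to\infty$ such that $F^{n_{k}}(y)-y$ is within $o(1)$ of integer vectors of the form $(0,b_{k})$ with $|b_{k}|\to\infty$. Passing to subsequences one gets $b_{k}\to+\infty$ and, applying the same reasoning to $F^{-1}$ (whose average rotation vector $(0,-\alpha)$ is still irrational), another sequence of return times with $b_{k}\to-\infty$. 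Hence the orbit of $y$ realises arbitrarily large vertical displacements with vanishing horizontal displacement, in both time directions.

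For each large $N$ I would then build a connected subset $K_{N}\subseteq W$ joining a point with second coordinate at least $N$ to one with second coordinate at most $-N$. Given a small ball $B(y;\delta)\subseteq B(z_{0};\varepsilon)$ chosen, via continuity of $F^{n_{k}}$, so that $F^{n_{k}}(B(y;\delta))\subseteq B(z_{b_{k}};\varepsilon)$, the family of iterates $F^{j}(B(y;\delta))$ for $0\leq j\leq n_{k}$ together with the terminal ball $B(z_{b_{k}};\varepsilon)$ forms a candidate ``orbit tube'' in $W$; combining forward and backward tubes with their integer vertical translates (using $\mathbb{Z}^{2}$-periodicity of $W$) yields connected ribbons of increasing vertical extent. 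A Hausdorff compactness argument on a sequence of such horizontally bounded, increasingly tall connected sets then extracts the required $K$ in the limit.

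The principal obstacle is the uniform horizontal bound on these tubes, and here the case assumption $\pi(\omega(B_{0}))\cap\pi(\omega(B_{\pi}))=\emptyset$ becomes essential. The positive distance $d>0$ in $\mathbb{T}^{2}$ between these closed $f$-invariant sets, together with the fact (Proposition~\ref{ilim horiz}) that the connected components of $\omega(B_{0})$ and $\omega(B_{\pi})$ are horizontally unbounded rightward and leftward respectively, should force any sufficiently long vertical excursion in $W$ to be confined to a strip whose width is controlled in terms of the initial position of $y$ and the separation $d$. Making this confinement precise --- most likely by showing that lifts of $\omega(B_{0})$ act as barriers to the right, and lifts of $\omega(B_{\pi})$ as barriers to the left, of any candidate tube --- is the heart of the proof.
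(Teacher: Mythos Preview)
Your proposal has the right ingredients but misidentifies the crux, and the proposed fix for the ``principal obstacle'' does not work. The horizontal bound is \emph{not} the hard part, and the proof of this proposition makes no use whatsoever of the case assumption $\pi(\omega(B_{0}))\cap\pi(\omega(B_{\pi}))=\emptyset$; the paper establishes it in full generality with no barrier argument. Your suggestion that translates of $\omega(B_{0})$ and $\omega(B_{\pi})$ confine the orbit tubes has no justification --- there is no invariance or disjointness relation between $W$ and those sets that would make them act as walls --- and the Hausdorff limit of ever-taller tubes does not by itself yield a uniform width, since tubes built from more iterates can be wider. The separate appeal to $F^{-1}$ to get downward displacement is also unnecessary.

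The idea you are missing is this: it suffices to produce a \emph{single bounded} connected set $C\subset W$ containing two points that differ by a nonzero integer vertical vector $(0,k)$, because then $K:=\bigcup_{i\in\mathbb{Z}}\bigl(C+(0,ik)\bigr)$ is automatically connected, horizontally bounded by the bound for $C$, vertically unbounded in both directions, and contained in $W$ by $\mathbb{Z}^{2}$-periodicity. To get such a $C$, fix a lift $x$ of $z$ and use Atkinson together with the irrationality of $\alpha$ (exactly the argument of Claim~3) to find $y\in B(x;\varepsilon)$ and \emph{two} return times $n_{j_{1}},n_{j_{2}}$ with $F^{n_{j_{\ell}}}(y)\in B(x;\varepsilon)+(0,k_{j_{\ell}})$ and $k_{j_{1}}/n_{j_{1}}\neq k_{j_{2}}/n_{j_{2}}$. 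Inductively set $O_{0}=B(x;\varepsilon)$, $O_{n}=F^{n_{j_{1}}}(O_{n-1})\cup\bigl(B(x;\varepsilon)+(0,nk_{j_{1}})\bigr)$, and analogously $V_{n}$ with $n_{j_{2}},k_{j_{2}}$; each is connected (consecutive pieces overlap through the orbit of $y$) and, being a finite construction, bounded and contained in $W$. Since both $O_{n_{j_{2}}}$ and $V_{n_{j_{1}}}$ contain $F^{n_{j_{1}}n_{j_{2}}}(B(x;\varepsilon))$, their union $C$ is a bounded connected subset of $W$ containing $x+(0,n_{j_{2}}k_{j_{1}})$ and $x+(0,n_{j_{1}}k_{j_{2}})$, which differ by $(0,k)$ with $k=n_{j_{2}}k_{j_{1}}-n_{j_{1}}k_{j_{2}}\neq 0$.
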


\begin{proof}
Define $O_0 = B(x; \varepsilon)$, where $x \in \pi^{-1}(z)$. 
As in the previous section we can use Atkinson's lemma to find
$n_j \in \mathbb{N}^*$, $k_j \in \mathbb{Z}$ and $y \in B(x; \varepsilon)$ such that 
$F^{n_j}(y) \in B(x; \varepsilon) + (0,k_j)$ and $\rho_p(F,y)=(0,\alpha)$.  
Arguing as is Claim $3$ we obtain $j_1, j_{2} \in \mathbb{N}$ such that 
$\frac{k_{j_1}}{n_{j_1}} \neq \frac{k_{j_2}}{n_{j_2}}$. 

We then define for $n \geq 0$  
\begin{displaymath}
O_n = F^{n_{j_1}} (O_{n -1}) \cup ( B(x; \varepsilon) + n(0, k_{j_1}) )
\end{displaymath}

Observe that $ F^{n_{j_1}} (O_0) \cap ( B(x; \varepsilon) + (0, k_{j_1}) ) \neq \emptyset$, so that $O_1$ is connected. 
Since $B(x; \varepsilon) + (0, k_{j_1}) \subseteq O_1$ and $F(x + p,y + q) = F(x,y) + (p,q)$ for all $(p,q) \in \mathbb{Z}^2$
we have that  $ F^{n_{j_1}} (O_1) \cap ( B(x; \varepsilon) + 2(0, k_{j_1}) ) \neq \emptyset$, so that $O_2$ is also connected. 
We see by induction that $O_n$ is connected for all $n \in \mathbb{N}$.

Define analogously $V_0 = B(x; \varepsilon)$ and for all $n \geq 0$ 
\begin{displaymath}
V_n = F^{n_{j_2}} (V_{n -1}) \cup ( B(x; \varepsilon) + n(0, k_{j_2}) )
\end{displaymath} 
Clearly, $V_n$ is connected for all $n \in \mathbb{N}$.

We want to see now that $O_{n_{j_2}} \cap V_{n_{j_1}} \neq \emptyset$. For that note that it follows from the definitions that  
\begin{displaymath}
O_{n_{j_2}} \supseteq F^{n_{j_1}} (O_{n_{j_2} - 1})  \supseteq F^{2n_{j_1}} (O_{n_{j_2} - 2}) \supseteq \ldots \supseteq F^{n_{j_2} n_{j_1}} (B(x; \varepsilon))
\end{displaymath}
and
\begin{displaymath}
V_{n_{j_1}} \supseteq F^{n_{j_2}} (V_{n_{j_1} - 1})  \supseteq F^{2n_{j_2}} (V_{n_{j_1} - 2}) \supseteq \ldots \supseteq F^{n_{j_1} n_{j_2}} (B(x; \varepsilon))
\end{displaymath}
We conclude that  $O_{n_{j_2}} \cup V_{n_{j_1}}$ is connected, 
so it contains the image of a curve $\gamma$ connecting $x + (0, n_{j_2}k_{j_1})$ 
and $x + (0, n_{j_1}k_{j_2})$. Since $k := | n_{j_2}k_{j_1} - n_{j_1}k_{j_2} | \neq 0$ the set  
$K = \cup_{i \in \mathbb{Z} } ([\gamma] + i(0, k))$ satisfies the proposition. 
\end{proof}

The proof of the next claim is similar to the proof of Proposition $9$ in \cite{AT1}.  
Some of the ideas that follow, especially those concerning figure $2$ below, can be traced 
back to the same paper.  

\begin{claim} 
$\overline{ \pi(\omega(B_{0})) } = \mathbb{T}^2$. 
\end{claim}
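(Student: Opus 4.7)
I would argue by contradiction. Suppose there exist $z \in \mathbb{T}^2$ and $\varepsilon > 0$ with $B(z;\varepsilon) \cap \pi(\omega(B_0)) = \emptyset$. Applying Proposition \ref{predense} with this $z$ and $\varepsilon$ produces a connected set $K \subseteq \mathbb{R}^2$ that is vertically unbounded both upward and downward, horizontally bounded by some $M_0$, and such that for every $y \in K$ there is $n(y) \leq 0$ with $F^{n(y)}(y) \in \pi^{-1}(B(z;\varepsilon))$. Since $\omega(B_0)$ is completely $F$-invariant and disjoint from $\pi^{-1}(B(z;\varepsilon))$, I obtain $K \cap \omega(B_0) = \emptyset$, i.e.\ $K \subseteq \omega(B_0)^c$.

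Next I would exploit the explicit form of $K$ from the proof of Proposition \ref{predense}: $K$ is the $(0,k)$-periodic union $\bigcup_{i \in \mathbb{Z}}([\gamma] + (0, ik))$ of translates of an arc $\gamma$ whose endpoints differ by $(0,k)$, for a nonzero integer $k$. On the quotient cylinder $\mathbb{R}^2/\langle(0,k)\rangle$, $K$ descends to a closed loop winding once around the vertical generator; such a loop separates the cylinder, and lifting back, $K$ partitions $\mathbb{R}^2$ into exactly two unbounded open components, a right region $\mathcal{R} \supseteq \{x_1 > M_0\}$ and a left region $\mathcal{L} \supseteq \{x_1 < -M_0\}$. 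By Proposition \ref{ilim horiz}, every connected component of $\omega(B_0)$ is horizontally unbounded rightward; being connected, disjoint from $K$, and meeting $\{x_1 > M_0\} \subseteq \mathcal{R}$, each such component must lie entirely in $\mathcal{R}$. Therefore $\omega(B_0) \subseteq \mathcal{R}$.

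The remaining task is to extract a contradiction from the containment $\omega(B_0) \subseteq \mathcal{R}$, and I expect this to be the main obstacle. Following the geometric strategy indicated by figure $2$ and the analogy with Proposition $9$ of \cite{AT1}, the plan is to combine the connectedness of $\omega(B_0)^c$ (Proposition \ref{vert inv}(1)) and the $(0,1)$-invariance of $\omega(B_0)$ (Proposition \ref{vert inv}(3)) with the dynamics of $F$ to produce either a second vertical transversal $K'$ obtained by reapplying Proposition \ref{predense}, or a forward iterate $F^m(K)$, whose combined separating effect is incompatible with placing all horizontally-unbounded rightward components of $\omega(B_0)$ inside $\mathcal{R}$. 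The technical delicacy here is that the invariance $\omega(B_0) + (p,0) \subseteq \omega(B_0)$ for $p \geq 0$ translates to $\omega(B_0)^c$ being closed only under \emph{leftward} integer horizontal shifts; consequently, the naive attempt to push $K$ rightward into successive barriers fails, and a finer argument exploiting the vertical winding of $K$ on the cylinder together with the $F$-dynamics is required to complete the contradiction.
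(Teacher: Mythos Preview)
Your setup is correct through the first paragraph: you obtain the barrier $K$ and deduce $K\cap\omega(B_0)=\emptyset$ from the complete $F$-invariance of $\omega(B_0)$. But from that point on you overcomplicate matters and leave a genuine gap. The separation argument placing $\omega(B_0)\subseteq\mathcal{R}$ is unnecessary, and the ``remaining task'' you describe---trying to leverage the one-sided translation invariance of $\omega(B_0)^c$ or to iterate $K$ forward---is the wrong direction. Your explicit worry that ``the naive attempt to push $K$ rightward into successive barriers fails'' is based on a misidentification of what is being translated.

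The missing idea is this: the \emph{dynamical property} defining $K$ is itself invariant under all integer translations, not just the set $\omega(B_0)^c$. Indeed, if $y\in K$ has $\pi(F^{n(y)}(y))\in B(z;\varepsilon)$, then for any $p\in\mathbb{Z}$ the point $y'=y+(p,0)$ satisfies $F^{n(y)}(y')=F^{n(y)}(y)+(p,0)$, hence $\pi(F^{n(y)}(y'))=\pi(F^{n(y)}(y))\in B(z;\varepsilon)$. Thus $K+(p,0)$ satisfies the conclusion of Proposition~\ref{predense} for every integer $p$, and in particular your first-paragraph argument yields $(K+(p,0))\cap\omega(B_0)=\emptyset$ for all $p\in\mathbb{Z}$. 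Now pick any $x\in\omega(B_0)$, let $\Gamma$ be its connected component, and choose $p$ large enough that $x$ lies to the left of $K+(p,0)$. Since $\Gamma$ is connected and unbounded rightward (Proposition~\ref{ilim horiz}) while $K+(p,0)$ is a vertically unbounded, horizontally bounded barrier, $\Gamma$ must meet $K+(p,0)$. That intersection point lies in $\omega(B_0)$ yet has a backward iterate projecting into $B(z;\varepsilon)$, and the contradiction is immediate. This is exactly how the paper finishes; no second barrier, no cylinder argument, and no appeal to forward iterates of $K$ is needed.
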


\begin{proof}
Assume by contradiction that there is $P \in \mathbb{T}^2$ and $\varepsilon > 0$ such that 
$B(P; \varepsilon) \cap \pi(\omega(B_0)) = \emptyset$.

By the previous proposition, there is a connected set $K \subset \mathbb{R}^2$ such that  
for all $y \in K$ there is $n(y) \leq 0$ such that $\pi(F^{n(y)} (y) ) \in B(P; \varepsilon)$. 
Furthermore $K$ is horizontally bounded and  vertically unbounded
(both upward and downward), so it separates the sets 
$R= \{ y \in \mathbb{R}^2 \, | \,  (y)_1 > M \}$ and 
$L = \{ y \in \mathbb{R}^2 \, | \, (y)_1 < - M \}$ for any $M$ greater than the horizontal bound for $K$. 

Take $x \in \omega(B_0)$ and let $\Gamma$ be the connected component of $\omega(B_{0})$ that passes through $x$. 
Take $p = M + \lfloor (x)_1 \rfloor + 1$. Since $p \in \mathbb{Z}$ the set $K + (p, 0)$ 
also satisfies Proposition $5$. Note that $x$ is leftward of $K + (p, 0)$ 
and recall that $\Gamma$ is unbounded rightward so we have that $\Gamma \cap (K+(0,p)) \neq \emptyset$.  

But if $z \in \Gamma \cap (K+(0,p))$ then $z \in \omega(B_{0})$ and there is 
$n(z) \leq 0$ such that $\pi( F^{n(z)} (z) ) \in B(P; \varepsilon)$. 
In particular, $F^{n(z)} (z) \in \omega(B_{0})$ (since $\omega(B_0)$ is completely invariant)
and $\pi(F^{n(z)} (z)) \in B(P; \varepsilon)$, a contradiction. 
\end{proof}

We can show analogously that $\overline{ \pi(\omega(B_{\pi })) } = \mathbb{T}^2 $.

Since $\overline{ \pi(\omega(B_{0})) } = \mathbb{T}^2 $ the set 
$\pi(\omega (B_0))$ must contain at least one non-fixed point and therefore 
$\omega (B_0)$ must contain a non-fixed point that we'll denote by $x$. 
By continuity there is a $\varepsilon > 0$ such that  
$F(B(x; \varepsilon)) \cap B(x; \varepsilon) = \emptyset$.  
Since we also have that $\overline{\pi(\omega(B_{\pi}))} = \mathbb{T}^2$, there is $(p,q) \in \mathbb{Z}^2$
and $y \in \mathbb{R}^2$ such that $\| (y + (p, q)) - x \| < \varepsilon$ 
and $y + (p, q) \in \omega(B_{\pi}) + (p,q)$. 

Since $\pi (\omega(B_{0})) \cap \pi(\omega(B_{\pi})) = \emptyset$ the distance between the 
compact sets $\omega(B_{0}) \cap \overline{B(x; \varepsilon)}$ and  
$(\omega(B_{\pi}) + (p, q)) \cap \overline{B(x; \varepsilon)}$ 
is a strictly positive number $d \leq 2 \varepsilon$ and is realized by points $x_1 \in \omega(B_0)$ 
and $y_1 \in (\omega(B_{\pi}) + (p,q))$. 

Let $v$ be the \emph{open} line segment connecting $x_1$ to $y_1$ (see Figure $2$). 
Observe that by our choice of $x_1$ and $y_1$ we have $F(v) \cap v = \emptyset$ and 
 $v \cap \omega(B_0) = \emptyset = v \cap (\omega(B_{\pi}) + (p, q))$.  
 Let $\Gamma$ be the connected component of $\omega(B_{0})$ that contains 
$x_1$ and let $\Theta$ be the connected component of 
$\omega(B_{\pi}) + (p,q)$ that contains $y_1$. 

We know from \cite{T1} (Proposition $8$) the following fact. 

\begin{figure}[t]
\includegraphics[scale=0.9]{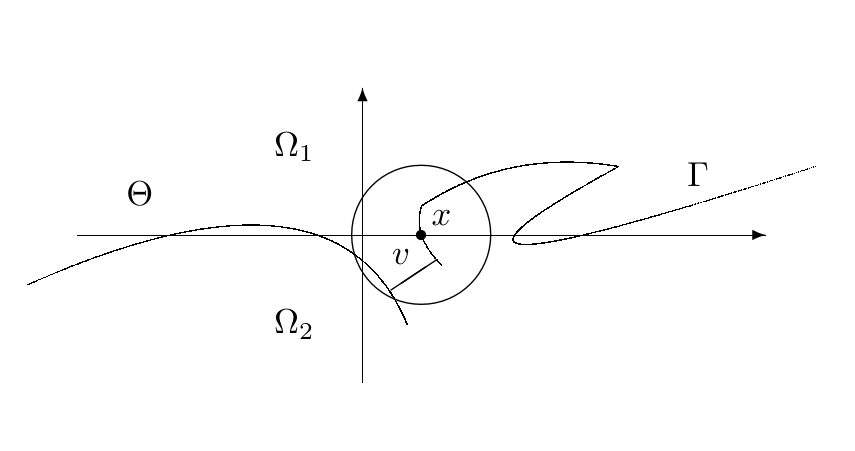}
\caption{$\Gamma$ denotes a connected component of $\omega(B_0)$ and 
$\Theta$ denotes a connected component of $\omega(B_\pi) + (p,q)$.   
The open line segment $v$ links $\Gamma$ and $\Theta$ in an $\varepsilon$-ball centered in $x \in \Gamma$. 
The set $(\Gamma \cup v \cup \Theta)^C$ 
has two connected components we call $\Omega_1$ and $\Omega_2$}
\end{figure}

\begin{claim} \label{unica cc}
The set 
\begin{displaymath}
A = ( \omega(B_{0}) \cup (\omega(B_{\pi}) + (p, q)) )^C
\end{displaymath}
has a single connected component. In particular, if $\Gamma$ is a connected component of $\omega(B_0)$ 
and $\Theta$ is a connected component of $(\omega(B_{\pi}) + (p, q))$ then
the set $(\Gamma \cup \Theta)^C$ has a single connected component. The set 
$(\Gamma \cup v \cup \Theta)^C$ has exactly two connected components.
\end{claim}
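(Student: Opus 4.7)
The statement is explicitly quoted from Proposition~8 of \cite{T1}, so my plan is to verify the hypotheses of that proposition in our setting and then extract the three asserted consequences, adapting the arguments where necessary.

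For the first assertion, that $A$ is connected, I would check that the setup here matches the framework of \cite{T1}: the sets $\omega(B_0)$ and $\omega(B_\pi)+(p,q)$ are invariant under integer vertical translations and each has connected complement (Proposition~\ref{vert inv}), all their connected components are horizontally unbounded in the expected direction (Proposition~\ref{ilim horiz}), and they lie in the disjoint half-planes $V_0^+$ and $V_\pi^+ + (p,q)$. Granted these, the argument of \cite{T1} applies: two points of $A$ are first joined by a path in the connected set $\omega(B_0)^C$ (supplied by Proposition~\ref{vert inv}), which is then modified to avoid $\omega(B_\pi)+(p,q)$ by detouring around its individual components, which is feasible because of the half-plane location together with the vertical periodicity of the bad set.

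For the \emph{in particular} statement about $(\Gamma \cup \Theta)^C$, note that $A \subseteq (\Gamma \cup \Theta)^C$ since $\Gamma \subseteq \omega(B_0)$ and $\Theta \subseteq \omega(B_\pi)+(p,q)$. Since $A$ is already connected, it suffices to show that each $w \in (\Gamma \cup \Theta)^C \setminus A$ is in the same connected component as $A$. Such a $w$ lies either in a component $\Gamma' \neq \Gamma$ of $\omega(B_0)$ or in a component of $\omega(B_\pi)+(p,q)$ different from $\Theta$. In the former case, working in the one-point compactification and using that $\omega(B_0)^C$ is connected and unbounded, one can reach $\infty$ while staying in $\Gamma^C$; a symmetric observation for $\Theta$ then lets me connect $w$ to a point of $A$ within $\Gamma^C \cap \Theta^C = (\Gamma \cup \Theta)^C$. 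The symmetric case is identical.

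Finally, for the assertion that $(\Gamma \cup v \cup \Theta)^C$ has exactly two components, observe that $\Gamma \cup v \cup \Theta$ is connected because the endpoints of the open segment $v$ lie on $\Gamma$ and $\Theta$ respectively; by Proposition~\ref{ilim horiz} combined with the half-plane containments, $\Gamma$ is unbounded rightward and $\Theta$ is unbounded leftward, so $\Gamma \cup v \cup \Theta$ is a horizontally unbounded connected continuum reaching both $+\infty$ and $-\infty$. In $S^2$, adjoining the point at infinity produces a compact connected continuum consisting of two prongs to $\infty$ (one through $\Gamma$, one through $\Theta$) joined by the arc $v$, which acts as a Jordan-type separator of $S^2$. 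The main obstacle is showing \emph{exactly two} components rather than more, since $\Gamma$ and $\Theta$ need not be simple arcs and so the Jordan curve theorem does not apply directly; I would handle this either by invoking the refined topological arguments of \cite{T1} (as Figure~2 suggests the authors do), or by approximating $\Gamma$ and $\Theta$ from outside by proper arcs going to infinity, applying the Jordan arc theorem to each approximation and passing to the limit while monitoring the two components $\Omega_1, \Omega_2$ of the complement.
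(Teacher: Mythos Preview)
Your approach matches the paper's: the paper provides no independent proof of this claim, simply invoking Proposition~8 of \cite{T1} as its source, so your plan to verify the hypotheses of that proposition and defer to its arguments is exactly what is (implicitly) done. One small correction to your sketch: the half-planes $V_0^+$ and $V_\pi^+ + (p,q)$ are \emph{not} disjoint when $p \ge 0$; the disjointness of $\omega(B_0)$ and $\omega(B_\pi)+(p,q)$ that you need comes instead from the standing hypothesis of this section, $\pi(\omega(B_0)) \cap \pi(\omega(B_\pi)) = \emptyset$.
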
 

The next claim uses an argument similar to the one used in Claim $2$.  

\begin{claim} \label{curva por todos} 
For all $y \in \mathbb{T}^2$ there is $z \in \pi^{-1} (y) \cap A$ and a continuous connected function $\delta:\mathbb{R}\to A$ such that
$\delta(i)=z+(0,i)$ for all integers $i$, and such that $\lim_{|t|\to\infty}||\delta(t)||=\infty$. Furthermore, the image of $\delta$ is a horizontally bounded set 
$[\delta]$ that
separates $\omega(B_{0})$ and $\omega(B_{\pi}) + (p, q)$. 
\end{claim}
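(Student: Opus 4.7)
The plan is to prove the three assertions of the claim in sequence: the existence of a lift $z \in A$, the construction of the curve $\delta$, and the separation property. For the first, given $y \in \mathbb{T}^{2}$, I would split on whether $y \in \pi(\omega(B_{0}))$, using the section's hypothesis $\pi(\omega(B_{0})) \cap \pi(\omega(B_{\pi})) = \emptyset$. If $y \notin \pi(\omega(B_{0}))$, every lift of $y$ lies in $\omega(B_{0})^{C}$, and any such lift $z$ whose first coordinate exceeds $p$ also lies in $(V_{\pi}^{+} + (p,q))^{C} \subseteq (\omega(B_{\pi}) + (p,q))^{C}$, so $z \in A$. Otherwise $y \in \pi(\omega(B_{0}))$ forces $y \notin \pi(\omega(B_{\pi}))$, so no lift of $y$ belongs to $\omega(B_{\pi}) + (p,q)$; picking a lift $z$ with sufficiently negative first coordinate then also puts it outside $V_{0}^{+} \supseteq \omega(B_{0})$, giving $z \in A$. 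In either case Proposition~\ref{vert inv} yields $z + (0,i) \in A$ for every $i \in \mathbb{Z}$.

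To build the curve, I would invoke Claim~\ref{unica cc}, which says that $A$ has a single connected component; since $A$ is open in $\mathbb{R}^{2}$, it is therefore path-connected, and one can fix a continuous $\gamma : [0,1] \to A$ with $\gamma(0) = z$ and $\gamma(1) = z + (0,1)$. Mimicking the translation-and-paste trick already used in Claim~\ref{diffcc}, I set
\begin{displaymath}
\delta(t) := \gamma(t - \lfloor t \rfloor) + (0, \lfloor t \rfloor), \qquad t \in \mathbb{R}.
\end{displaymath}
Continuity at each integer $i$ is immediate from $\gamma(1) + (0, i-1) = z + (0,i) = \gamma(0) + (0,i)$; Proposition~\ref{vert inv} also gives $A + (0,i) = A$, so $[\delta] = \bigcup_{i \in \mathbb{Z}} ([\gamma] + (0,i)) \subseteq A$, with $\delta(i) = z + (0,i)$ by construction. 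Compactness of $[\gamma]$ provides a common horizontal bound for $[\delta]$, while $[\delta]$ is vertically unbounded in both directions and $\lim_{|t| \to \infty} \|\delta(t)\| = \infty$ because $(\delta(t))_{2} \to \pm \infty$ as $t \to \pm \infty$.

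For the separation property, $[\delta]$ is a closed connected set which is horizontally bounded and vertically unbounded upward and downward; a standard planar argument (for instance, passing to $S^{2}$ so that $[\delta] \cup \{\infty\}$ is a connected compactum and applying Janiszewski-type separation) yields distinct connected components $U_{L}$ and $U_{R}$ of $\mathbb{R}^{2} \setminus [\delta]$ containing the far-left and far-right halfplanes, respectively. Since $[\delta] \subseteq A$ is disjoint from both $\omega(B_{0})$ and $\omega(B_{\pi}) + (p,q)$, each connected component of these two sets lies in a single component of $\mathbb{R}^{2} \setminus [\delta]$. Combined with $\omega(B_{0}) \subseteq V_{0}^{+}$ and $\omega(B_{\pi}) + (p,q) \subseteq V_{\pi}^{+} + (p,q)$, Proposition~\ref{ilim horiz} implies that every component of $\omega(B_{0})$ is unbounded rightward and therefore sits in $U_{R}$, while every component of $\omega(B_{\pi}) + (p,q)$ is unbounded leftward and sits in $U_{L}$, yielding the required separation. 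The main obstacle in this plan is the first step: one must genuinely leverage the disjointness hypothesis of this section together with the asymmetric half-plane containments of $\omega(B_{0})$ and $\omega(B_{\pi})$, as the mere nonemptiness of $A$ from Proposition~\ref{A nao vazio} is not enough to produce a lift in $A$ over every torus point.
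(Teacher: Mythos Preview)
Your proof is correct and follows essentially the same approach as the paper: use the disjointness hypothesis $\pi(\omega(B_0))\cap\pi(\omega(B_\pi))=\emptyset$ together with the half-plane containments to locate a lift $z\in A$, invoke Claim~\ref{unica cc} to join $z$ and $z+(0,1)$ by a path $\gamma$ in $A$, and set $\delta(t)=\gamma(t-\lfloor t\rfloor)+(0,\lfloor t\rfloor)$. You are simply more explicit than the paper in handling both cases of the first step and in spelling out the separation argument via Proposition~\ref{ilim horiz}, which the paper leaves implicit.
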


\begin{proof} 
Since $\pi (\omega(B_{0})) \cap \pi(\omega(B_{\pi})) = \emptyset$ assume, without loss of 
generality, that $y \notin \pi (\omega(B_{0}))$. 
Then there is a point $z \in \pi^{-1}(y)$ such that $(z)_1 > p$. For $i \in \mathbb{Z}$ all points 
of the form $z + (0,i)$ 
have first coordinates strictly greater than $p$ and therefore none of them belongs to $\omega(B_{\pi}) + (p, q)$. 
In particular, $\{ z + (0,i) \; | \; i \in \mathbb{Z} \} \cap A^C = \emptyset$. 

By claim $\ref{unica cc}$ the set $A$ has a single connected component hence 
$z$ and $z + (0,1)$ are in the same connected component of $A$. 
Consider a curve $\gamma: [0,1] \to A$ such that 
$\gamma(0) = z$ and $\gamma(1) = z + (0,1)$.  
The function $\delta(s) := \gamma(s-\lfloor s\rfloor)+(0,\lfloor s\rfloor)$ has image $[\delta]=\cup_{i \in \mathbb{Z} } ([\gamma] + (0,i))$ and satisfies the claim.  
\end{proof}

Denote by $\Omega_1$ and $\Omega_2$ the two connected components of 
$(\Gamma \cup v \cup \Theta)^C$. 
Note that $\Omega_1$ and $\Omega_2$ are open
and that $\partial \Omega_1 = \partial \Omega_2 = \Gamma \cup v \cup \Theta$. 

\begin{claim}
Let $y \in \mathbb{T}^2$. Take $z \in \pi^{-1} (y)$ and $\delta$ given by the previous claim. Then there is $T>0$ and $k_1, k_2 \in \{0,1\}$ with $k_1 \neq k_2$ such that, for
all $t>T$, $\delta(t)\in \Omega_{k_1}$ and for all $t<-T,\, \delta(t)\in \Omega_{k_2}.$ In particular, $z + (0, j_1) \in \pi^{-1} (y) \cap \Omega_{k_1}$
for all $j_1 > |T|$ and $z + (0, j_2) \in \pi^{-1} (y) \cap \Omega_{k_2}$ for all $j_2 < - |T|$. 
\end{claim}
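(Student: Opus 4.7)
The claim has two parts: (a) the existence of $T>0$ and $k_1,k_2\in\{0,1\}$ with $\delta((T,\infty))\subset\Omega_{k_1}$ and $\delta((-\infty,-T))\subset\Omega_{k_2}$; (b) the assertion that $k_1\neq k_2$.

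For part (a), I would use a standard continuity argument. Since $\|\delta(t)\|\to\infty$ as $|t|\to\infty$ and $\overline{v}$ is compact, there exists $T>0$ such that $\delta(t)\notin\overline{v}$ whenever $|t|>T$. Combined with $\delta(\mathbb{R})\subset A$, which is disjoint from $\Gamma\cup\Theta$ (since $\Gamma\subset\omega(B_0)$ and $\Theta\subset\omega(B_\pi)+(p,q)$ are subsets of $A^C$), this yields $\delta(t)\in(\Gamma\cup v\cup\Theta)^C=\Omega_0\cup\Omega_1$ for $|t|>T$. Continuity of $\delta$ and connectedness of the rays $(T,\infty)$ and $(-\infty,-T)$ then force each image into a single component, producing the required $k_1$ and $k_2$.

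For part (b), I would argue by contradiction: assume $k_1=k_2=k$, and denote the other component by $\Omega'$. Since $\delta(t)\in\Omega_k$ for all $|t|>T$, the intersection $[\delta]\cap\Omega'$ is contained in the compact set $\delta([-T,T])$. The strategy is to produce a path from some $p\in\Gamma$ to some $q\in\Theta$ lying entirely in $\mathbb{R}^2\setminus[\delta]$, which would contradict the separation of $\Gamma$ and $\Theta$ by $[\delta]$ established in Claim 7.

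The needed ingredients are: first, $\partial\Omega'=\Gamma\cup v\cup\Theta$, so $\Gamma,\Theta\subset\overline{\Omega'}$ and points of $\Gamma$ or $\Theta$ can be approached from inside $\Omega'$; second, $\Omega'$ is unbounded, since its boundary is; third, $\Gamma$ is horizontally unbounded rightward and $\Theta$ leftward (Proposition 4), so one can choose $p\in\Gamma$ and $q\in\Theta$ with $(p)_1$ and $-(q)_1$ larger than the horizontal bound of $[\delta]$. A suitable path would then proceed from $p$ via a small arc into $\Omega'$, navigate inside $\Omega'$ through a far region lying outside the horizontal extent of $[\delta]$, and finish with a small arc onto $q$. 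The initial and final arcs stay far from $[\delta]$ by construction.

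The main obstacle I anticipate is the middle navigation step: verifying that within $\Omega'$ the compact obstruction $\delta([-T,T])$ can always be avoided on a route connecting neighborhoods of $p$ and $q$. The argument will use that removing a bounded subset from an open connected planar region preserves its structure at infinity, together with the fact that $\Omega'$ accumulates on the horizontally unbounded sets $\Gamma$ and $\Theta$, so the unbounded portion of $\Omega'$ provides a corridor around the bounded obstruction. This is topologically similar in spirit to the construction used in the proof of Claim 2.
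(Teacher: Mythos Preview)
Your argument for part (a) is correct and coincides with the paper's.

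For part (b) you propose the ``dual'' separation argument: under the assumption $k_1=k_2$, build a path from a point of $\Gamma$ to a point of $\Theta$ avoiding $[\delta]$, contradicting that $[\delta]$ separates $\omega(B_0)$ from $\omega(B_\pi)+(p,q)$. You correctly isolate the difficulty: the detour must take place inside $\Omega'$, but $\Omega'\setminus\bigl(\delta([-T,T])\cap\Omega'\bigr)$ need not be connected, and nothing you have written guarantees that the neighbourhoods of $p$ and $q$ lie in the same component after removing the compact obstruction. The heuristics you offer (``structure at infinity'', ``$\Omega'$ accumulates on $\Gamma$ and $\Theta$'') do not settle this: an open connected planar set can certainly be disconnected by removing a compact arc, and there is no a priori reason why the two far pieces of $\Omega'$ near $p$ and near $q$ should communicate around $\delta([-T,T])$. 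As it stands this is a genuine gap, not just a routine detail.

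The paper avoids the difficulty entirely by reversing the roles. Instead of connecting $\Gamma$ to $\Theta$ avoiding $[\delta]$, it modifies $[\delta]$ to avoid $\Gamma\cup v\cup\Theta$: since $\delta(-2T)$ and $\delta(2T)$ both lie in the open connected set $\Omega_{k_1}$, join them by a curve $\beta$ inside $\Omega_{k_1}$; then
\[
\delta\bigl((-\infty,-2T]\bigr)\cup[\beta]\cup\delta\bigl([2T,\infty)\bigr)
\]
is connected, horizontally bounded (each piece is), vertically unbounded both upward and downward (from the tails of $\delta$), and disjoint from $\Gamma\cup v\cup\Theta$ by construction. But $\Gamma\cup v\cup\Theta$ is connected and unbounded both rightward and leftward, so it must cross any such vertical barrier---a contradiction. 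This sidesteps the routing problem because connecting two points inside a single open connected set requires nothing beyond path-connectedness.
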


\begin{proof}
Since $v$ is bounded and $\lim_{|t|\to\infty}||\delta(t)||=\infty$, there exists $T \in \mathbb{R}$ such that  
$\delta(t) \cap v = \emptyset$ for all $|t| > T$.  Since $[\delta] \subset A$, this implies that for all $t>|T|,\, \delta(t)$ belongs to either $\Omega_1$ or $\Omega_2,$ and since $\delta$ is continuous
there exists $k_1, k_2 \in \{0,1\}$ such that, for
all $t>T$, $\delta(t)\in \Omega_{k_1}$ and for all $t<-T,\, \delta(t)\in \Omega_{k_2}.$

It remains to be shown that $k_1\not= k_2.$
Assume this is not true. Then there is a curve $\beta$ that connects $\delta(-2T)$ to $\delta(2T)$ 
without leaving $\Omega_{k_1}$. 
The set $\delta(]- \infty, -2T) \cup [\beta] \cup \delta(]2T, + \infty[)$ is connected, horizontally bounded and vertically unbounded (both upward and downward). 
By definition, this set does not intercept the set $\Gamma \cup v \cup \Theta$. But this is a contradiction since $\Gamma \cup v \cup \Theta$ is connected and unbounded both leftward and rightward. 
\end{proof}

\begin{claim}
$F (\Omega_i) \cap \Omega_i \neq \emptyset$ for $i = \{1,2\}$. 
\end{claim}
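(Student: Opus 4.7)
I would proceed by contradiction: suppose $F(\Omega_1)\cap\Omega_1=\emptyset$ (the case $i=2$ is symmetric). Because $F$ is a homeomorphism, $F(\Omega_1)$ is open and connected and disjoint from the open set $\Omega_1$; since $\Gamma\cup v\cup\Theta$ is closed with complement $\Omega_1\sqcup\Omega_2$, connectedness forces $F(\Omega_1)\subseteq\overline{\Omega_2}$. Arguing identically with $F^{-1}$ yields $F^{-1}(\Omega_1)\subseteq\overline{\Omega_2}$, so the open connected set $F(\Omega_2)$ meets $\Omega_1$; applying the same dichotomy to $F(\Omega_2)$ leaves two possibilities: the \emph{swap} case $F(\Omega_j)\subseteq\overline{\Omega_{3-j}}$ for $j\in\{1,2\}$, or the \emph{stay} case $F(\Omega_j)\subseteq\overline{\Omega_2}$ for both $j$.

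The stay case can be dispatched topologically. Under it, $F(\Omega_1\cup\Omega_2)\subseteq\overline{\Omega_2}$ and surjectivity of $F$ force $\Omega_1\subseteq F(\Gamma\cup v\cup\Theta)$; equivalently the non-empty open set $F^{-1}(\Omega_1)$ is contained in $\Gamma\cup v\cup\Theta$. Because the open segment $v$ has empty interior in $\mathbb{R}^2$, this open set would sit inside $\Gamma\cup\Theta\subseteq \omega(B_0)\cup(\omega(B_\pi)+(p,q))$. But both $\omega(B_0)$ and $\omega(B_\pi)+(p,q)$ must have empty interior: otherwise some $\pi(\omega(B_\bullet))\subseteq\mathbb{T}^2$ would contain an open set $U$, and since $\pi(\omega(B_0))$ and $\pi(\omega(B_\pi))$ are disjoint but each is dense in $\mathbb{T}^2$ by Claim~4, the other dense projection would necessarily meet $U$, a contradiction.

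For the swap case I would deploy Atkinson's lemma. Since $\rho_\lambda(F)=(0,\alpha)$, the observable $g_1=((F-\mathrm{id})\circ\pi^{-1})_1$ has $\lambda$-integral zero. Picking $\tilde z\in\Omega_1$ Lebesgue-generic, Atkinson yields $n_k\to\infty$ with $f^{n_k}(\pi\tilde z)\to\pi\tilde z$ and $(F^{n_k}(\tilde z)-\tilde z)_1\to 0$, so $F^{n_k}(\tilde z)=\tilde z+(0,\ell_k)+\varepsilon_k$ with $\ell_k\in\mathbb{Z}$, $\varepsilon_k\to 0$ and $\ell_k/n_k\to\alpha\neq 0$, giving $|\ell_k|\to\infty$ of constant sign. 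In the swap case $F^{2m}$ preserves each $\Omega_j$ while $F^{2m+1}$ swaps them, so $F^{n_k}(\tilde z)\in\overline{\Omega_{j(n_k)}}$ with $j(n_k)$ dictated by the parity of $n_k$. On the other hand, by the preceding claim (adapted via a horizontal integer shift so $\tilde z$ becomes the base lift), the translate $\tilde z+(0,\ell)$ lies in a fixed $\Omega_{k(\mathrm{sgn}\,\ell)}$ for $|\ell|$ large, with $k(+)\neq k(-)$. Since $F^{n_k}(\tilde z)$ is eventually in $B(\tilde z+(0,\ell_k),\varepsilon_k)\subseteq\Omega_{k(\mathrm{sgn}\,\alpha)}$, the dynamical side must match the translational side; combined with the analogous outcome from Atkinson applied to $F^{-1}$ (which reverses the sign of $\ell$ while preserving the swap pattern), this forces $k(+)=k(-)$, contradicting $k(+)\neq k(-)$.

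The main obstacle I anticipate is parity control in the swap case: Atkinson provides $n_k$ with no a priori restriction on parity, and the scheme above requires that sufficiently many $n_k$ of each parity occur (or at least a favourable mix between the forward and backward applications). This should follow from Weyl equidistribution of $(n\alpha)$ together with the structure of the Atkinson return times, but it is the delicate ingredient. The adaptation of the previous claim to the preferred base lift $\tilde z$, by contrast, should be routine given the $\mathbb{Z}^2$-equivariance of $\omega(B_0)$ and $\omega(B_\pi)+(p,q)$.
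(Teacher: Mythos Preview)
Your approach is far more elaborate than necessary, and it has genuine gaps. The paper's proof is two lines: since $(0,0)\in\rho(F)$, Lemma~1 (Franks) produces $y\in\mathbb{T}^2$ with $F(z)=z$ for every $z\in\pi^{-1}(y)$; the previous claim then guarantees that $\pi^{-1}(y)$ meets both $\Omega_1$ and $\Omega_2$, so each $\Omega_i$ contains a fixed point of $F$ and the conclusion is immediate.

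Your argument, by contrast, has a faulty case split. From $F(\Omega_1)\cap\Omega_1=\emptyset$ you correctly get $F(\Omega_1)\subseteq\overline{\Omega_2}$ and $F^{-1}(\Omega_1)\subseteq\overline{\Omega_2}$, hence $F(\Omega_2)\cap\Omega_1\neq\emptyset$. But the ``same dichotomy'' does not apply to $F(\Omega_2)$: the inclusion $F(\Omega_1)\subseteq\overline{\Omega_2}$ followed from the \emph{assumption} $F(\Omega_1)\cap\Omega_1=\emptyset$, and you have no such assumption for $\Omega_2$. The connected open set $F(\Omega_2)$ can perfectly well meet both $\Omega_1$ and $\Omega_2$ (indeed $\overline{\Omega_1}\cup\overline{\Omega_2}=\mathbb{R}^2$ with nonempty overlap), so your ``stay'' case is vacuous (it contradicts $F(\Omega_2)\cap\Omega_1\neq\emptyset$, which you already established) while the genuine third case --- $F(\Omega_2)$ meeting both sides --- is never addressed.

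Even in the swap case your argument is incomplete. Beyond the parity issue you flag, your ``generic $\tilde z\in\Omega_1$'' need not lie in $A$: $\Omega_1$ is a component of $(\Gamma\cup v\cup\Theta)^C$, not of $A^C$, and may contain points of other components of $\omega(B_0)$ or $\omega(B_\pi)+(p,q)$. The previous claim (Claim~7) is proved for the specific lift $z\in A$ furnished by Claim~6 together with its curve $\delta$, and your ``horizontal integer shift'' does not obviously manufacture such a curve through an arbitrary $\tilde z$. The short fixed-point route avoids all of this machinery.
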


\begin{proof}
Since $(0, 0) \in \rho(F)$ by Lemma $1$ 
there is $y \in \mathbb{T}^2$ fixed for $f$ and such that $F(z) - z = (0, 0)$
for all $z \in \pi^{-1}(y)$. 
We conclude therefore that both $\Omega_1$ and $\Omega_2$ have fixed points, which proves the desired result. 
\end{proof}

We can now prove the following proposition. 

\begin{claim}
Either 
$F(\overline{\Omega_1}) \subseteq \overline{\Omega_1}$ or 
$F(\overline{\Omega_2}) \subseteq \overline{\Omega_2}$. 
\end{claim}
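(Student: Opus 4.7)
My strategy is to track where $F$ sends the separating set $\Gamma \cup v \cup \Theta$. First, $F(v)$ is disjoint from this set: $F(v) \cap v = \emptyset$ by the choice of $v$; and $F(v)$ is disjoint from $\Gamma$ and from $\Theta$ because $v$ is disjoint from the $F$-invariant sets $\omega(B_{0})$ and $\omega(B_{\pi}) + (p,q)$ (so $F(v) \cap \omega(B_{0}) = F(v \cap \omega(B_{0})) = \emptyset$, and similarly for the other set). Being connected, $F(v)$ lies in exactly one of the two components of $(\Gamma \cup v \cup \Theta)^{C}$; after relabeling, assume $F(v) \subseteq \Omega_{1}$.

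The central step is to show $F(\Gamma), F(\Theta) \subseteq \overline{\Omega_{1}}$, and hence $F(\Gamma \cup v \cup \Theta) \subseteq \overline{\Omega_{1}}$. Since $F(\Gamma)$ is a connected component of the $F$-invariant set $\omega(B_{0})$, either $F(\Gamma) = \Gamma \subseteq \overline{\Omega_{1}}$, or $F(\Gamma) \cap \Gamma = \emptyset$. In the latter case, the same invariance reasoning gives $F(\Gamma) \cap (\Gamma \cup v \cup \Theta) = \emptyset$ (using also that $\omega(B_{0}) \cap (\omega(B_{\pi}) + (p,q)) = \emptyset$ and $v \cap \omega(B_{0}) = \emptyset$), so by connectedness $F(\Gamma)$ lies in $\Omega_{1}$ or $\Omega_{2}$. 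But $F(x_{1}) \in F(\Gamma)$ is a limit point of $F(v) \subseteq \Omega_{1}$ (since $x_{1}$ is a limit point of $v$), whence $F(x_{1}) \in \overline{\Omega_{1}}$; since $\Omega_{2} \cap \overline{\Omega_{1}} = \emptyset$, this forces $F(\Gamma) \subseteq \Omega_{1}$. The parallel argument at $y_{1}$ gives $F(\Theta) \subseteq \overline{\Omega_{1}}$.

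The conclusion follows quickly. From $F(\Gamma \cup v \cup \Theta) \cap \Omega_{2} = \emptyset$ we get $\Omega_{2} \subseteq F(\Omega_{1}) \cup F(\Omega_{2})$, and since $\Omega_{2}$ is connected it is contained in exactly one of these two open components of $F(\Gamma \cup v \cup \Theta)^{C}$. The previous claim provides a fixed point in $\Omega_{2}$, so $\Omega_{2} \cap F(\Omega_{2}) \neq \emptyset$; hence $\Omega_{2} \subseteq F(\Omega_{2})$, which is equivalent to $F^{-1}(\Omega_{2}) \subseteq \Omega_{2}$, and taking complements in $\mathbb{R}^{2}$ yields $F(\overline{\Omega_{1}}) \subseteq \overline{\Omega_{1}}$. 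The main subtlety is the case split in the middle step: one must exclude the possibility that $F(\Gamma)$ is some other component of $\omega(B_{0})$ lying in the opposite region $\Omega_{2}$. This is precisely where the link segment $F(v)$ earns its keep, since it forces $F(\Gamma)$ and $F(\Theta)$ to attach to the same side of $\Gamma \cup v \cup \Theta$ as $F(v)$ itself.
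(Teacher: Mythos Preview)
Your proof is correct and follows essentially the same route as the paper: show $F(v)$ misses $\Gamma\cup v\cup\Theta$, place $F(v)$ in (say) $\Omega_1$, use the case split $F(\Gamma)=\Gamma$ versus $F(\Gamma)\cap\Gamma=\emptyset$ together with the connection to $F(v)$ to force $F(\Gamma),F(\Theta)\subseteq\overline{\Omega_1}$, and finish with the fixed point in $\Omega_2$ furnished by the previous claim. The only cosmetic differences are that you pin down the side of $F(\Gamma)$ via the limit point $F(x_1)\in\overline{F(v)}\subseteq\overline{\Omega_1}$ (the paper instead uses connectedness of $F(\Gamma\cup v)$), and in the endgame you argue directly that $\Omega_2\subseteq F(\Omega_2)$ and pass to complements, whereas the paper rules out $\Omega_2\subseteq F(\Omega_1)$ first; these are interchangeable formulations of the same step.
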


\begin{proof}
By the definition of $v$ we have that $v \cap \omega(B_0) = \emptyset = v \cap (\omega(B_{\pi}) + (p, q))$.  
Since $\omega(B_0)$ and $\omega(B_\pi)$ are invariant, $F(v) \cap \Gamma = \emptyset = F(v) \cap \Theta$.  
Furthermore, $v$ was chosen such that $F(v) \cap v = \emptyset$. Hence, 
\begin{displaymath}
F(v) \cap (\Gamma \cup v \cup \Theta) = \emptyset
\end{displaymath}
so $F(v)$ is either in $\Omega_1$ or in $\Omega_2$. 
Let's assume, without loss of generality, that $F(v) \subseteq \Omega_1$.
Observe that , since $\Gamma$ is a connected component of $\omega(B_{\pi})$ and $\omega(B_{\pi})$ is totally invariant,  
$F(\Gamma)$ is a connected component of $\omega(B_{\pi })$. 
Therefore, either $F(\Gamma) = \Gamma$ or $F(\Gamma) \cap \Gamma = \emptyset$. 

In the first case  we have that $F(\Gamma) \cap \Omega_2 = \Gamma \cap \Omega_2 = \emptyset$. 
This is also true in the second case: It's clear that 
$F(\Gamma) \cap \partial\Omega_1 = \emptyset$ since by the definition of $v$ we have $F(\Gamma) \cap v = \emptyset$ and 
$\omega(B_0) \cap \omega(B_{\pi}) = \emptyset$ implies $F(\Gamma) \cap \Theta = \emptyset$. 
Since $\Gamma \cup v $ is connected 
$F(\Gamma \cup v)$ is also connected so $F(v) \subseteq \Omega_1$ implies $F(\Gamma) \cap \Omega_2 = \emptyset$. 
We see analogously that $F(\Theta) \cap \Omega_2 = \emptyset$. 
We note that in any case $F(\partial \Omega_1) \cap \partial \Omega_1 = \emptyset$. 

Since $F(\partial \Omega_1) \cap \partial \Omega_1 = \emptyset$ and 
$F(v) \subseteq \Omega_1$ we have that $F(\partial \Omega_1) \cap \Omega_2 = \emptyset$. 
But $\Omega_1$ and $\Omega_2$ are connected 
so either $\Omega_2 \subseteq F(\Omega_1)$ or $\Omega_2 \cap F(\Omega_1) = \emptyset$. 

Assume that $\Omega_2 \subseteq F(\Omega_1)$: 
Then $F^{-1}(\Omega_2) \subseteq \Omega_1$, so $F^{-1}(\Omega_2) \cap \Omega_2 = \emptyset$. 
But by Claim $7$ we know $F(\Omega_2) \cap \Omega_2 \neq \emptyset$, a contradiction.  
Therefore we necessarily have that $\Omega_2 \cap F(\Omega_1) = \emptyset$ and since 
$F(\partial \Omega_1) \cap \partial \Omega_1 = \emptyset$ we necessarily must have
$F(\overline{\Omega_1}) \subseteq \overline{\Omega_1}$. 
\end{proof}

We are now ready to finish the proof of Theorem $2$. There is a small technicality to deal with, 
however the idea is simple and is illustrated by Figure $3$ bellow. 

\begin{figure}[h]
\includegraphics[scale=0.75]{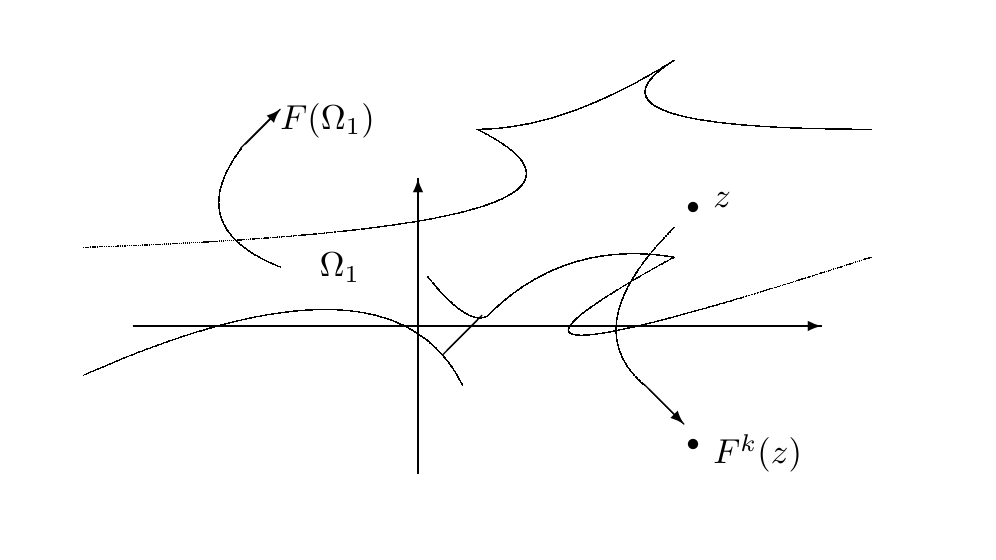}
\caption{We can't have $F(\overline{\Omega_1}) \subseteq \overline{\Omega_1}$ since by \cite{F1} 
there is a point $z \in \mathbb{R}^2$ such that $F^k(z) - z = k(0, 1)$. A similar argument show that 
the case $F(\overline{\Omega_2}) \subseteq \overline{\Omega_2}$ also can't happen}
\end{figure}


\begin{proof} [Proof of Theorem $2$]
Assume that 
 $F(\overline{\Omega_1}) \subseteq \overline{\Omega_1}$ (the other case is analogous). 
Then by induction $F^s(\overline{\Omega_1}) \subseteq \overline{\Omega_1}$ for 
$s \in \mathbb{N}$. 
Since $(0, 1), (0, -1) \in \rho(F)$ we know 
by Lemma $1$ that there are $y_1, y_2 \in \mathbb{T}^2$ both fixed for $f$ and such that  for 
all $k \in \mathbb{Z}$ we have 
\begin{displaymath}
F^k(z) - z = k(0, 1) \qquad \text{and} \qquad F^k(w) - w = k(0, -1) 
\end{displaymath}
for all $z \in \pi^{-1}(y_1)$ and $w \in \pi^{-1}(y_2)$. 

We can now use Claim $6$ putting $y = y_1$ to find 
$z \in \pi^{-1}(y_1)$ and a function $\delta_1$ such that $[\delta_1]$
that separates $\omega(B_0)$ and $\omega(B_{\pi}) + (p,q)$. From Claim $7$ we get, without loss of 
generality, that  $z \in \Omega_1$. 
We repeat the same procedure putting $y = y_2$ in Claim $6$ to find $\delta_2$ as before and, by Claim $7$, 
we get, without loss of generality, that  $w \in \Omega_1$. 

Note that if there is $k > 0$ such that either $F^k (z) = z + k(0, 1)$ or $F^k (w) = w +k (0, -1)$ belongs to $\Omega_2$
we would have that $ F^{k}(\overline{\Omega_1}) \cap \overline{\Omega_2}\not=\emptyset$, 
a contradiction since $F^{s}(\overline{\Omega_1}) \subseteq \overline{\Omega_1}.$ Therefore, for all $k\ge 0$, both $z+(0,k)$ and $w-(0,k)$ belong to $\Omega_1$, and thus, by Claim $7$, there exists $T_1>0$ such that,
if $k<-T_1$, then $z+(0,k)\in\Omega_2$.

Let $\beta:[0,1]\to A$ be a curve joining $z$ and $w$. There exists $L>T_1$ such that, if $|k|>L$ then $[\beta]+(0,k)$ is disjoint from $v$, and therefore $[\beta]+(0,k)$ belongs to $\Omega_1$ if $k>L$, and to $\Omega_2$ if $k<-L$. But this implies that 
$w-(0,k)\in\Omega_2$ for $k<-L$, our final contradiction.  
\end{proof}


 
\end {document}